\newdimen{\algindent}
\algnewcommand\LeftComment[2]{%
\hspace{#1\algindent}$\triangleright$ \eqparbox{COMMENT}{#2} \hfill %
}
\journal{ArXiv}
\newtheorem{theorem}{Theorem}[section]
\newtheorem{thm}{Theorem}[section]
\newtheorem{lemma}{Lemma}[section]
\newtheorem{remark}{Remark}[section]
\newtheorem{definition}{Definition}[section]
\newtheorem{alg}[thm]{Algorithm}
\newcommand{\Grad}{\ensuremath{\nabla}}
\newcommand{\omint}{\int_{\Omega}}
\newcommand{\pdv}[2][]{\frac{\partial#1}{\partial#2}}
\newcommand{\pare}[1]{\left({}#1\right)}
\newcommand{\lip}[2]{\left({}#1,#2\right){}}
\newcommand{\eps}{\varepsilon}
\newcommand{\delt}{\ensuremath{\Delta t}}
\newcommand{\hrho}{\hat{\rho}}
\newcommand{\ph}{{\rho}_h}
\newcommand{\vh}{v_h}
\newcommand{\R}{\mathbb R}
\newcommand{\ohm}{\Omega}
\newcommand{\E}{\mathcal{E}}
\newcommand{\Z}{\mathcal{Z}}
\newcommand{\be}{\begin{equation}}
\newcommand{\ee}{\end{equation}}
\newcommand{\bea}{\begin{eqnarray}}
\newcommand{\eea}{\end{eqnarray}}
\newcommand{\beas}{\begin{eqnarray*}}
\newcommand{\eeas}{\end{eqnarray*}}
\newcommand{\ba}{\begin{array}}
\newcommand{\ea}{\end{array}}
\newcommand{\norm}[1]{\ensuremath{\left\|{#1}\right\|}}
\newcommand{\trinorm}[1]{{\left\vert\kern-0.15ex\left\vert\kern-0.15ex\left\vert #1 
    \right\vert\kern-0.15ex\right\vert\kern-0.15ex\right\vert}}
\newcommand{\abs}[1]{\ensuremath{\left\lvert{#1}\right\rvert}}
\long\def\bcom#1\ecom{}
\begin{document}

\begin{frontmatter}



\title{Numerical Analysis of a Bio-Polymerization Model }

\author[UNLV]{Ali Balooch}
\ead{ali.balooch@unlv.edu}
\author[Penn]{Faranak Courtney-Pahlevani}
\ead{fxp10@psu.edu}
\author[MS]{Lisa Davis}
\ead{lisa.davis@montana.edu}
\author[UPB]{Adrian  Dunca}
\ead{argus_adrian.dunca@upb.ro}
\author[UNLV]{Monika Neda}
\ead{monika.neda@unlv.edu}
\ead[url]{http://neda.faculty.unlv.edu/}
\author[VT]{Jorge Reyes\corref{cor1}}
\ead{reyesj@vt.edu}

\cortext[cor1]{Corresponding author}

\affiliation[UNLV]{organization={Department of Mathematical Sciences},
            addressline={University of Nevada, Las Vegas}, 
            city={Las Vegas},
            postcode={Box 454020}, 
            state={NV},
            country={USA}}
            
\affiliation[Penn]{organization={Division of Science \& Engineering },
            addressline ={ Penn State University - Abington }, 
            city={ Abington },
            postcode={19001}, 
            state={PA},
            country={USA}}
            
\affiliation[MS]{organization={Department of Mathematical Sciences},
            addressline={Montana State University}, 
            city={Bozeman},
            postcode={Box 172400}, 
            state={MT},
            country={USA}}

\affiliation[UPB]{organization={Department of Mathematics and Informatics},
            addressline={Politehnica University of Bucharest}, 
            city={Bucharest},
            country={Romania}}
            
\affiliation[VT]{organization={Department of Mathematics},
            addressline ={ Virginia Tech}, 
            city={ Blacksburg},
            postcode={24060}, 
            state={VA},
            country={USA}}
            
\begin{abstract}

This work studies a stabilization technique for first-order hyperbolic differential equations used in DNA transcription modeling. Specifically we use the Lighthill-Whitham-Richards Model with a nonlinear Greenshield's velocity proposed in \cite{davis2024numerical}. Standard finite element methods are known to produce spurious oscillations when applied to nonsmooth solutions. To address this, we incorporate stabilization terms involving spatial and temporal filtering into the system. We present numerical stability and prove convergence results for both the backwards Euler and time filtered formulations. We also present several computational results to demonstrate the rates in space and in time as well as for selected biological scenarios.

\end{abstract}



\begin{keyword}
Lighthill-Whitham-Richards Model \sep Greenshield's velocity model  \sep finite element \sep DNA transcription \sep
\PACS 
02.60.-x  \sep 02.60.Cb \sep 87.10.Ed \sep 87.15.Aa 
\MSC[2020] 
65M12 \sep 65M60 \sep 92-08 \sep 92-10 
\end{keyword}
\end{frontmatter}

\section{Introduction}
\label{sec:intro}
This paper describes the analysis of a simulation framework for a comprehensive model of ribosome abundance control in bacteria.  The full model combines a mathematical description of bio-polymerization processes: \textit{transcription} of ribosomal RNA, transcription of \textit{messenger RNA } (mRNA) of ribosomal proteins (r-proteins) and \textit{translation} of r-proteins with a characterization of feedback mechanisms that regulate initiation rates, processing rates and abundance of key molecules.  Here we focus on the prototype for each of the compartment models describing the processes of transcription and translation.   These processes are fundamentally characterized by the motion of a molecular motor copying a segment of DNA or mRNA.  The speed at which these motors travel along the strand while reading the gene is referred to as the \textit{elongation velocity}, and in the authors' previous work \cite{davis2021accurate,davis2014_2,davis2024numerical}, the LWR model with the Greenshield's velocity is used for each compartment.  The model is useful for describing the situation where many motors are copying the segment simultaneously.  In such cases, the elongation velocities can be non-uniform, resulting in significant variations in the density of the molecular motors in different spatial regions of the DNA segment.  For a comprehensive description of the model development, the reader is referred to \cite{davis2024numerical} and the references therein.

The LWR with the Greenshield's model \cite{Greenshields_1935} is given by 
\begin{equation}
\frac{\partial \rho}{\partial t} + \left ( v_{f}- \frac{2v_{f}}{\rho_{m}}\rho \right )\frac{\partial \rho}{\partial x} = 0,
\label{eq: problem_description}
\end{equation}
where $v_{f}$ is the free flow speed and $\rho_{m}$ is the maximum jam density. The free flow speed $v_{f}$ represents the speed of the traffic when the density $\rho$ is zero. The maximum density $\rho_{m}$ is the traffic density at which the speed of traffic $v$ is equal to zero. 

It is known that when we solve the hyperbolic partial differential equation (PDE) given by equation \eqref{eq: problem_description}, oscillations exist around the shock solutions.  Herein, we apply a stabilization of finite element method (FEM) introduced in \cite{dunca2015vreman} for the advection equation based on Vreman filtering \cite{vreman2003filtering}. Additionally, we use a simple time filter 
first introduced in \cite{guzel2018time}. This time filter does not add computational complexity and is shown to be second order accurate in time \cite{guzel2018time,decaria2020time}.

In \cite{BDPS23} the time filter is shown to be effective at improving accuracy when combined with an explicit first order upwind finite difference method with minimal expense in numerical implementation for the linear advection equation case.  There the filtered scheme introduces a small amount of dissipation into the behavior of a non-smooth solution, thereby increasing accuracy while preventing spurious oscillations. 

The remainder of the paper is organized as follows. Section 2 has notation, preliminaries, and algorithms. Section 3 presents numerical results for the stability and convergence analysis of the model, followed by computational experiments in Section 4. We conclude our work in Section 5.

\section{Notation, Preliminaries and Algorithms}
\label{sec:Notation}
Let the interval $\Omega \subset \R$ denote the  domain, while the $L^{2}(\Omega)$ norm and the inner product are denoted by $\left \| \cdot \right \|$ and $ (\cdot, \cdot) $ respectively. The norm of the $ H^k(\ohm)$ space is denoted by $ \norm{\cdot}_k$.
The analysis is carried out in the periodic setting. Thus, the function space used is $X:=H_{\#}^{1}(\Omega)$, which is the closure of the $C^{\infty}$ periodic functions in $ \Omega$ in the $ H^1$ norm.
Let $ X_h \subset X $ be the subspace of finite elements.

The continuous mean $ \overline{u} \in X$ of $ u \in X $ is the solution of the PDE \cite{germano1986differential},
\begin{equation*}
    -\delta^2 \Delta \overline{u}+ \overline{u} = u.
\end{equation*}
We denote $ G:L^2(\ohm) \to X$ as the filtering operator, i.e. $ G(u) = \overline{u}$.
Furthermore, we consider the discrete mean $ \overline{u}^h \in X_h $ of $ u \in X$, defined as 
the unique solution of
\begin{equation*}\label{filterdfn}
\delta^{2}(\nabla\overline{u}^{h},\nabla v_{h})+(\overline{u}^{h}%
,v_{h})=(u,v_{h}), \ \ \forall v_{h}\in X_{h},
\end{equation*}
where the mean quantities are associated with the filtering length scale $\delta$ \cite{manica2007finite}. We denote the discrete filtering operator as $ G_h: L^2(\Omega) \to X_h $, where $ G_h(u) = \overline{u}^h $. Next, we define the deconvolution operators,

\begin{definition}
\label{defvC} The $N^{th}$ order van Cittert continuous and discrete
deconvolution operators as $D_{N}: X \mapsto X$ and $D_{N}^{h}:X_ \mapsto X_h$ are, respectively,
\begin{equation*}
D_{N}\ :=\ \sum_{n=0}^{N}(I\,-\,G)^{n}\ ,\ \ \mbox{and }\ \ D_{N}^{h}%
\ :=\ \sum_{n=0}^{N}(I\,-\,G_{h})^{n}\,.
\end{equation*}
\end{definition}

The stability and accuracy properties of continuous and discrete averaging and deconvolution have been studied in \cite{BIL06,dunca2006stolz,manica2007finite,layton2008numerical,layton2012approximate,layton2008helicity}. The operators denoted as $\overline{\cdot}$ and $\overline{\cdot}^h$ represent filtering and lend the physical meaning to the parameter $\delta $ as the filter width. In general, in the finite element setting, $\delta$ depends on the mesh size denoted by $h$. %

We present several properties and lemmas used in our analysis below.

\begin{lemma}[Smoothing Property \cite{connors2010accuracy}]
\label{lem: Smoothing_lemma}
For any $ u_h \in X_h$,
\begin{equation*}
    \delta^2 \norm{ \triangle^h \pare{ D_N^h \overline{u_h}^h } } + \delta \norm{ \Grad \pare{ D_N^h \overline{u_h}^h } } + \norm{ D_N^h \overline{u_h}^h } \leq C(N) \norm{u_h},
\end{equation*}
where $  \triangle^h $ is the discrete Laplacian operator and $ C(N)$ is a generic constant that depends on the order of deconvolution $N$.
\end{lemma}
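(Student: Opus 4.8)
The plan is to diagonalize everything. First I would observe that the weak definition of the discrete filter identifies the operator as $G_h=(I-\delta^2\triangle^h)^{-1}$, where $\triangle^h:X_h\to X_h$ is the discrete Laplacian determined by $(\triangle^h u_h,v_h)=-(\Grad u_h,\Grad v_h)$ for all $v_h\in X_h$. Since $-\triangle^h$ is self-adjoint and positive semidefinite with respect to $(\cdot,\cdot)$, it admits an $L^2$-orthonormal eigenbasis $\{\phi_i\}\subset X_h$ with eigenvalues $\lambda_i\ge 0$. Crucially, $G_h$, $I-G_h$, and hence $D_N^h=\sum_{n=0}^N (I-G_h)^n$ are all polynomials (or resolvents) in $\triangle^h$, so they commute and share this same eigenbasis. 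Writing $u_h=\sum_i c_i\phi_i$ and setting $s_i:=\delta^2\lambda_i\ge 0$, the eigenvalue of $G_h$ is $\mu_i=(1+s_i)^{-1}$, the eigenvalue of $I-G_h$ is $1-\mu_i=s_i/(1+s_i)$, and a short computation of the telescoping Neumann sum gives the eigenvalue of the composite operator $D_N^h G_h$ acting on $w:=D_N^h\overline{u_h}^h$ as
\begin{equation*}
\sigma(s_i)\;=\;\mu_i\sum_{n=0}^{N}(1-\mu_i)^n\;=\;1-\left(\frac{s_i}{1+s_i}\right)^{N+1}.
\end{equation*}

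Next I would reduce the three norms to uniform bounds on scalar symbols via Parseval's identity. Expanding $w=\sum_i \sigma(s_i)c_i\phi_i$ and using orthonormality together with $\norm{\Grad w}^2=(-\triangle^h w,w)=\sum_i\lambda_i\sigma(s_i)^2 c_i^2$ and $\norm{\triangle^h w}^2=\sum_i \lambda_i^2\sigma(s_i)^2 c_i^2$, the three quantities become
\begin{equation*}
\norm{w}^2=\sum_i \sigma(s_i)^2 c_i^2,\quad \delta^2\norm{\Grad w}^2=\sum_i s_i\,\sigma(s_i)^2 c_i^2,\quad \delta^4\norm{\triangle^h w}^2=\sum_i s_i^2\,\sigma(s_i)^2 c_i^2.
\end{equation*}
Since $\norm{u_h}^2=\sum_i c_i^2$, it suffices to bound $\sigma(s)^2$, $s\,\sigma(s)^2$, and $s^2\,\sigma(s)^2$ uniformly for $s\ge 0$ by constants depending only on $N$; each weighted sum is then controlled by the corresponding constant times $\norm{u_h}^2$, and taking square roots and summing yields the claim.

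The key estimate driving all three bounds is the factorization. Writing $g=s/(1+s)\in[0,1)$ and $1-g=(1+s)^{-1}$,
\begin{equation*}
\sigma(s)=1-g^{N+1}=(1-g)\sum_{k=0}^{N}g^{k}\le (N+1)(1-g)=\frac{N+1}{1+s}.
\end{equation*}
From this, $\sigma(s)\le 1$ gives $\norm{w}\le\norm{u_h}$; the bound $s\,\sigma(s)^2\le (N+1)^2\,s/(1+s)^2\le (N+1)^2/4$ (the scalar maximum of $s/(1+s)^2$ being $1/4$ at $s=1$) gives $\delta\norm{\Grad w}\le \tfrac{N+1}{2}\norm{u_h}$; and $s^2\,\sigma(s)^2\le (N+1)^2\big(s/(1+s)\big)^2\le (N+1)^2$ gives $\delta^2\norm{\triangle^h w}\le (N+1)\norm{u_h}$. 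Adding these produces the stated inequality with $C(N)=\tfrac{3}{2}(N+1)+1$.

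I expect the only genuinely delicate points to be bookkeeping rather than deep: verifying the simultaneous diagonalizability and $L^2$-self-adjointness of $G_h$, $D_N^h$, and $\triangle^h$ so that Parseval applies cleanly, and pinning down the symbol inequality $\sigma(s)\le (N+1)/(1+s)$, which is what converts the superficially unbounded weights $s$ and $s^2$ into $N$-dependent constants. The remaining algebra is routine once the common eigenbasis is in place.
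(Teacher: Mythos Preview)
The paper does not prove this lemma; it is stated with a citation to \cite{connors2010accuracy} and used as a black box in the subsequent analysis. Your spectral argument is correct and is essentially the standard proof of this smoothing estimate: once one recognizes that $G_h=(I-\delta^2\triangle^h)^{-1}$ on $X_h$, the operators $G_h$, $I-G_h$, $D_N^h$, and $\triangle^h$ all commute and diagonalize simultaneously, so the three norms reduce to uniform bounds on the scalar symbols $\sigma(s)$, $s^{1/2}\sigma(s)$, and $s\,\sigma(s)$, which you handle cleanly via the factorization $\sigma(s)\le (N+1)/(1+s)$.
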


\begin{lemma}[Deconvolution error estimate \cite{dunca2015vreman}]\label{lem: deconvolution error}
There exist general constants $ C,C(N)$ such that for all $ u\in X\cap H^{2N+2}(\ohm) \cap H^{k+1}$,\\

$ \delta \norm{ \Grad u - \Grad D_N^h \overline{u}^h} \leq C(N) ( \delta h^k + h^{k+1} ) \norm{u}_{k+1} + C \delta^{2N+3} \norm{\Grad u}_{2N+3}. $
\end{lemma}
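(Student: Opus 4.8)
The plan is to bound the error by the triangle inequality, separating the purely continuous van Cittert deconvolution error from the finite element discretization error:
\begin{equation*}
\delta\norm{ \Grad u - \Grad D_N^h \overline{u}^h } \leq \underbrace{\delta\norm{ \Grad u - \Grad D_N \overline{u} }}_{(\mathrm{I})} \; + \; \underbrace{\delta\norm{ \Grad D_N \overline{u} - \Grad D_N^h \overline{u}^h }}_{(\mathrm{II})}.
\end{equation*}
Term $(\mathrm{I})$ is the error of the continuous deconvolution, while term $(\mathrm{II})$ measures how well the discrete filter/deconvolution $D_N^h G_h$ approximates its continuous counterpart $D_N G$. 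I expect $(\mathrm{I})$ to produce the $\delta^{2N+3}$ consistency term and $(\mathrm{II})$ to produce the $h^k$ finite element terms.

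For term $(\mathrm{I})$, I would first establish the algebraic identity $I - D_N G = (I-G)^{N+1}$, which follows directly from $D_N = \sum_{n=0}^{N}(I-G)^n$ by a short telescoping computation (writing $D_N G = D_N - D_N(I-G)$ and noting $D_N(I-G) = D_{N+1}-I$). Hence $u - D_N\overline{u} = (I-G)^{N+1}u$. Since the Helmholtz filter satisfies $(I-G)w = -\delta^2\Delta\overline{w} = -\delta^2\Delta G w$, each application of $(I-G)$ contributes a factor $\delta^2$ and costs two derivatives, while $G$ is bounded on $L^2$. Combined with the outer $\delta\Grad$ this yields
\begin{equation*}
(\mathrm{I}) = \delta\,\delta^{2(N+1)}\norm{ \Grad (\Delta G)^{N+1} u } \leq C\,\delta^{2N+3}\norm{\Grad u}_{2N+3}.
\end{equation*}

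For term $(\mathrm{II})$, I would use the same identity in the discrete setting, $I - D_N^h G_h = (I-G_h)^{N+1}$, so that $D_N\overline{u} - D_N^h\overline{u}^h = (I-G_h)^{N+1}u - (I-G)^{N+1}u$, and then telescope the difference of $(N+1)$-st powers:
\begin{equation*}
(I-G_h)^{N+1} - (I-G)^{N+1} = \sum_{j=0}^{N}(I-G_h)^{N-j}\,(G - G_h)\,(I-G)^{j}.
\end{equation*}
The central factor $G-G_h$ is precisely the finite element error of the Helmholtz filter, whose standard estimate is controlled by $(\delta h^k + h^{k+1})\norm{\cdot}_{k+1}$-type bounds; the remaining powers of $(I-G)$ and $(I-G_h)$, together with the $\delta\Grad$ in front, are absorbed using the $L^2$-stability of the filters and the smoothing property of Lemma~\ref{lem: Smoothing_lemma}, which prevents any accumulation of negative powers of $\delta$. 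Summing the $N+1$ summands then gives $(\mathrm{II}) \leq C(N)(\delta h^k + h^{k+1})\norm{u}_{k+1}$.

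The main obstacle I anticipate is term $(\mathrm{II})$: one must interleave the finite element filter-error estimate with repeated applications of the discrete and continuous smoothing and stability bounds so that each summand carries exactly one factor of $h^k$ (or $h^{k+1}$) while all $\delta$-weights remain nonnegative. The delicate bookkeeping is to ensure the leading $\delta\Grad$ lands on a factor to which Lemma~\ref{lem: Smoothing_lemma} applies, and to reconcile the mismatch between the continuous operator $(I-G)^{j}$ and the discrete operator $(I-G_h)^{N-j}$; organizing this argument by induction on $N$ is likely the cleanest route.
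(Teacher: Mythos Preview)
The paper does not supply a proof of this lemma; it is stated with a citation to \cite{dunca2015vreman} and used as a black box in the convergence analysis. Your proposal is therefore not competing with any argument in the present paper.

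That said, your outline is the standard route and is essentially the one in the cited reference: split via the triangle inequality into the continuous deconvolution error $(\mathrm{I})$ and the discretization error $(\mathrm{II})$; use the geometric-series identity $I-D_NG=(I-G)^{N+1}$ together with $(I-G)=-\delta^2\Delta G$ to get the $\delta^{2N+3}$ consistency term; and telescope $(I-G_h)^{N+1}-(I-G)^{N+1}$ to reduce $(\mathrm{II})$ to a sum of terms each carrying one factor of $G-G_h$, which is controlled by the C\'ea estimate for the discrete Helmholtz filter. The only point where your sketch is slightly imprecise is the invocation of Lemma~\ref{lem: Smoothing_lemma} for $(\mathrm{II})$: what you actually need is the uniform boundedness of $(I-G_h)$ in the weighted norm $\delta\norm{\Grad\cdot}+\norm{\cdot}$, which follows directly from testing the discrete filter equation with $G_h w$ and does not require the full smoothing lemma. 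With that stability in hand, each summand in the telescoping sum is bounded by $C(N)(\delta h^k+h^{k+1})\norm{u}_{k+1}$ as you claim.
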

\noindent Additionally, we assume the following approximation properties, \cite{brenner2008mathematical,gunzburger2012finite}:
\bea
    \inf_{v \in X_{h}} \| u - v \| &\le& C h^{k+1} \| u \|_{k+1},\;\; u \in
    H^{k+1}(\Omega), \label{eq: l2projbound} \\
    \inf_{v\in X_{h}}\|\Grad( u - v) \| &\le& C h^{k} \| u \|_{k+1},\;\; u \in
    H^{k+1}(\Omega).  \label{eq: h1projbound}
\eea

\noindent There exists an interpolant $I_h$ of $u$ as constructed by Brenner and Scott \cite{brenner2008mathematical} so that \eqref{eq: l2projbound} and \eqref{eq: h1projbound} hold with $I_h$ instead of $v$. We will apply the approximation property 4.4.25 with $p=\infty, m=2,s=1,l=0,n=1$ on page 110, where $l$ is defined as in Lemma 4.4.41 in \cite{brenner2008mathematical}. Thus, $I_h$ approximates $u$ in the norm of the Sobolev space $W^{1}_{\infty}$ in the order of $h$. Due to these approximation properties of the interpolant $I_h$, it follows that if $\norm{\frac{\partial^2 u}{\partial x^2}}_{L^{\infty}}<C$ then both terms below are bounded uniformly
for $h<1$, i.e.
\begin{equation*}\label{regu2}
\norm{\frac{\partial I_h}{\partial x}}_{L^{\infty}} ,\norm{\frac{\partial ( u - I_h) }{\partial x}}_{L^{\infty}} <C.
\end{equation*}

Lemma \ref{lem: algebra_lemma} is a convenient algebraic identity from \cite{decaria2020time} which will be used in a similar manner for the convergence analysis.

\begin{lemma}
\label{lem: algebra_lemma}
The following identity holds
\beas
& \pare{\frac{3}{2}a-2b+\frac{1}{2}c}\pare{\frac{3}{2}a-b+\frac{1}{2}c} = \\
& \frac{1}{4}\pare{a^2+(2a-b)^2+(a-b)^2}- \frac{1}{4}\pare{b^2 +(2b-c)^2 +(b-c)^2 } + \frac{3}{4} (a-2b+c)^2.
\eeas
\end{lemma}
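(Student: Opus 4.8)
The plan is to treat this purely as a polynomial identity in the scalar variables $a,b,c$ and verify it by direct expansion, comparing the coefficient of each monomial on the two sides. The left-hand side is a product of two linear forms and the right-hand side is a linear combination of quadratics, so both sides are homogeneous quadratic polynomials in $a,b,c$; it therefore suffices to check agreement of the six coefficients attached to $a^2,\,b^2,\,c^2,\,ab,\,ac,\,bc$. (In the convergence analysis $a,b,c$ will be instantiated as $L^2$-type terms and the products read as inner products, but by symmetry and bilinearity the scalar identity transfers immediately to that setting, so nothing beyond the scalar computation is needed.)

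First I would expand the left-hand product $\pare{\frac{3}{2}a-2b+\frac{1}{2}c}\pare{\frac{3}{2}a-b+\frac{1}{2}c}$ term by term and collect like monomials, which yields the single quadratic form $\frac{9}{4}a^2 + 2b^2 + \frac{1}{4}c^2 - \frac{9}{2}ab + \frac{3}{2}ac - \frac{3}{2}bc$. Next I would expand each grouped term on the right-hand side: the first bracket gives $a^2+(2a-b)^2+(a-b)^2 = 6a^2 - 6ab + 2b^2$, the second gives $b^2+(2b-c)^2+(b-c)^2 = 6b^2 - 6bc + 2c^2$, and the remainder gives $(a-2b+c)^2 = a^2 + 4b^2 + c^2 - 4ab + 2ac - 4bc$. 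Substituting these into $\frac{1}{4}(\,\cdot\,) - \frac{1}{4}(\,\cdot\,) + \frac{3}{4}(\,\cdot\,)$ and collecting monomials reproduces exactly the quadratic form obtained for the left-hand side, which establishes the equality.

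Because the computation is entirely mechanical, there is no genuine obstacle here — the only risk is an arithmetic slip while tracking the six coefficients, and a single sign error (most plausibly in the $ab$ or $bc$ terms, where several contributions combine) would be the thing to guard against. The one structural point worth noting, though not required for the proof itself, is the telescoping shape of the right-hand side: the first and second brackets are the same quadratic functional evaluated on the shifted pairs $(a,b)$ and $(b,c)$. Consequently, when this identity is summed over consecutive time levels in the convergence analysis the bracketed contributions collapse, leaving only endpoint terms together with the manifestly signed remainder $\frac{3}{4}(a-2b+c)^2$. This is precisely the feature that makes the identity useful as a discrete energy/G-stability device, in the same spirit as its use in \cite{decaria2020time}.
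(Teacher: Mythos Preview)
Your proposal is correct: the computation is accurate, and direct expansion is the natural way to verify this polynomial identity. The paper does not actually prove the lemma---it simply cites it from \cite{decaria2020time}---so your argument supplies what the paper omits, and your observation about the telescoping structure is exactly how the identity is exploited in the proof of Theorem~\ref{cor: Timefilter Lemma}.
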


The convergence analysis uses a discrete Gronwall inequality \cite{heywood1990finite} stated below.
\begin{lemma}\label{lem:discreteGronwall} Let $\Delta t$, H, and $a_{n},b_{n},c_{n},\gamma_{n}$
(for integers $n \ge 0$) be finite nonnegative numbers such that
\begin{equation*}
a_{l}+\Delta t \sum_{n=0}^{l} b_{n} \le \Delta t \sum_{n=0}^{l} \gamma_{n}a_{n} +
\Delta t\sum_{n=0}^{l}c_{n} + H \ \ for \ \ l\ge 0. \label{gronwall1}
\end{equation*}
Suppose that $\Delta t \gamma_n < 1 \; \forall n$. Then,
\begin{equation*}
a_{l}+ \Delta t\sum_{n=0}^{l}b_{n} \le \exp\left( \Delta t\sum_{n=0}^{l} \frac{\gamma_{n}}{1 - \Delta t \gamma_n } \right) \left( \Delta t\sum_{n=0}^{l}c_{n} + H
\right)\ \ for \ \ l \ge 0.
\end{equation*}
\end{lemma}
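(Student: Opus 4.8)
The plan is to establish the estimate by induction on $l$, after recasting the implicit inequality into the standard explicit form to which an elementary discrete Gronwall inequality applies. Throughout, write $E_l := a_l + \Delta t\sum_{n=0}^l b_n$ for the left-hand quantity and $F_l := \Delta t\sum_{n=0}^l c_n + H$ for the data; since every $b_n\ge 0$ we have $a_n\le E_n$, and since every $c_n\ge 0$ the sequence $F_l$ is nonnegative and nondecreasing. Substituting $a_n\le E_n$ into the hypothesis (which is assumed at every level $l$) gives $E_l \le \Delta t\sum_{n=0}^l \gamma_n E_n + F_l$.

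First I would treat the diagonal term. The obstruction to a direct induction is that the index $n=l$ appears on both sides; this is exactly where the hypothesis $\Delta t\gamma_l<1$ is needed. Splitting off that term and moving it to the left yields $(1-\Delta t\gamma_l)E_l \le \Delta t\sum_{n=0}^{l-1}\gamma_n E_n + F_l$, in which the coefficient $1-\Delta t\gamma_l$ is strictly positive.

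The key step is the change of variables $y_n := (1-\Delta t\gamma_n)E_n$, equivalently $E_n = \sigma_n y_n$ with $\sigma_n := (1-\Delta t\gamma_n)^{-1} = 1+\Delta t\tilde\gamma_n$ and $\tilde\gamma_n := \gamma_n/(1-\Delta t\gamma_n)$. The point of this substitution is that it absorbs the diagonal factor \emph{without} rescaling the data $F_l$: since $\gamma_n\sigma_n = \tilde\gamma_n$, the inequality becomes $y_l \le \Delta t\sum_{n=0}^{l-1}\tilde\gamma_n\, y_n + F_l$, which is precisely the explicit discrete Gronwall form with nonnegative multipliers $\Delta t\tilde\gamma_n$ and nondecreasing, unscaled forcing $F_l$ (note $y_n\ge 0$ as well).

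To this I would apply the elementary discrete Gronwall inequality, whose short inductive proof rests on the telescoping product identity $1+\sum_{n=0}^{l-1}p_n\prod_{m=0}^{n-1}(1+p_m)=\prod_{n=0}^{l-1}(1+p_n)$ together with the monotonicity of $F_l$; this gives $y_l \le F_l\prod_{n=0}^{l-1}(1+\Delta t\tilde\gamma_n)\le F_l\exp\!\big(\Delta t\sum_{n=0}^{l-1}\tilde\gamma_n\big)$, the last step using $1+x\le e^x$. Finally I would undo the substitution: $E_l = \sigma_l y_l = (1+\Delta t\tilde\gamma_l)y_l \le e^{\Delta t\tilde\gamma_l}F_l\exp(\Delta t\sum_{n=0}^{l-1}\tilde\gamma_n) = F_l\exp(\Delta t\sum_{n=0}^{l}\tilde\gamma_n)$, which is exactly the claim. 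I expect the only subtlety to be the bookkeeping in the diagonal absorption, and in particular recognizing that it is the reconversion $E_l=\sigma_l y_l$ — not the Gronwall step itself — that supplies the final $n=l$ contribution to the exponent, which is why the sum in the conclusion runs up to $l$ rather than $l-1$.
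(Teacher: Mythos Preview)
Your argument is correct. The substitution $y_n=(1-\Delta t\gamma_n)E_n$ is exactly the right device to convert the implicit inequality into an explicit one without touching the forcing $F_l$, and your observation that $\sigma_l=1+\Delta t\tilde\gamma_l$ is what makes the final index $n=l$ appear in the exponent cleanly.

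As for comparison: the paper does not actually prove this lemma. It is quoted verbatim with a citation to Heywood--Rannacher \cite{heywood1990finite} and used as a black box in the convergence proofs of Theorems~\ref{thm:ErrorEstimate} and~\ref{cor: Timefilter Lemma}. Your proof is essentially the standard one found in that reference, so there is nothing substantive to contrast.
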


Next we introduce the variational formulation for the LWR model with Greenshield's velocity and Vreman stabilization term. Find $\rho_h \in X_h$ satisfying for all $v \in X_{h}$
\begin{align*}
\lip{ \pdv{t} \rho_{h}}{ v_h}+v_{f}\lip{\pdv{x}{\rho_{h}}}{ v_h}-\frac{2v_{f}}{\rho_{m}} b(\rho_h,\rho_h,v_h)
+ \chi \delta^2 \lip{\pdv{x} \rho_{h}^* }{ \pdv{x} v_h^*}  = 0, 
\end{align*}

where the stabilization term $\chi \delta^2 \lip{\pdv{x} \rho^{*}}{ \pdv{x} v_h^*}  $, initially introduced by Vreman \cite{vreman2003filtering} is added. Here $ \rho^* = \rho - D_N^h\overline{\rho}^h$ and the goal of this term is to improve accuracy and damp unwanted spurious oscillations. 
This also introduces the dimensionless $ \chi$ as a stabilization parameter which can be manually tuned to increase or decrease the effects of the added stabilization term. 
Based on \cite{kang1997nonlinear} we also define trilinear term $ b(\cdot,\cdot,\cdot): X\times X \times X \to \mathbb{R} $ as 
\begin{equation}\label{eq: bterm}
     b(u,v,w) = \frac{1}{3} \omint \pare{\pdv{x}(uv) + u\pdv[v]{x} }w \; dx .
\end{equation}
In \cite{kang1997nonlinear,shen1995nonlinear} it is proven that $ b$ is skew-symmetric, i.e.
\begin{equation}\label{eq:skew_symm}
    b(u,v,w) + b(u,w,v) = 0.
\end{equation}

We also use the following lemma.
\begin{lemma} \label{lem: B_terms}
For $u, v, w \in X$ we have the following identities
    \bea
    \label{eq: f2} b(u,v,w)&=&\frac{1}{3}\int_{\Omega} u\frac{\partial v}{\partial x}w{\rm d}x-\frac{1}{3}\int_{\Omega} v\frac{\partial w}{\partial x}u{\rm d}x,\\
    \label{eq: f3} b(u,v,w)&=&-\frac{1}{3}\int_{\Omega} v\frac{\partial u}{\partial x}w{\rm d}x-\frac{2}{3}\int_{\Omega} v\frac{\partial w}{\partial x}u{\rm d}x, \\
    \label{eq: f4} 2\int_{\Omega} u\frac{\partial v}{\partial x}v{\rm d}x &=& -\int_{\Omega} v\frac{\partial u}{\partial x}v{\rm d}x.
    \eea    
\end{lemma}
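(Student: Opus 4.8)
The plan is to derive all three identities directly from the definition \eqref{eq: bterm} by integration by parts, using crucially that we work in the periodic space $X=H^1_\#(\Omega)$, so that every boundary contribution vanishes; in one space dimension the embedding $H^1\hookrightarrow L^\infty$ guarantees the triple products are integrable and each manipulation is legitimate. First I would apply the product rule $\frac{\partial}{\partial x}(uv)=\frac{\partial u}{\partial x}v+u\frac{\partial v}{\partial x}$ inside \eqref{eq: bterm} and collect the two $u\frac{\partial v}{\partial x}w$ contributions, obtaining the working form
\[
b(u,v,w)=\frac13\int_\Omega \frac{\partial u}{\partial x}\,v\,w\,dx+\frac23\int_\Omega u\,\frac{\partial v}{\partial x}\,w\,dx .
\]
Each of the stated identities then follows by choosing which of these two terms to integrate by parts.

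For \eqref{eq: f2} I would integrate the first term by parts, writing $\int_\Omega \frac{\partial u}{\partial x}(vw)\,dx=-\int_\Omega u\,\frac{\partial}{\partial x}(vw)\,dx=-\int_\Omega u\frac{\partial v}{\partial x}w\,dx-\int_\Omega u\,v\,\frac{\partial w}{\partial x}\,dx$. Substituting into the working form and combining the two $u\frac{\partial v}{\partial x}w$ terms (with coefficients $-\tfrac13+\tfrac23=\tfrac13$) leaves exactly $\frac13\int_\Omega u\frac{\partial v}{\partial x}w\,dx-\frac13\int_\Omega v\frac{\partial w}{\partial x}u\,dx$, which is \eqref{eq: f2}. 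For \eqref{eq: f3} I would instead integrate the second term by parts, $\int_\Omega (uw)\frac{\partial v}{\partial x}\,dx=-\int_\Omega v\,\frac{\partial}{\partial x}(uw)\,dx=-\int_\Omega v\frac{\partial u}{\partial x}w\,dx-\int_\Omega v\,u\,\frac{\partial w}{\partial x}\,dx$; after substitution the two $\frac{\partial u}{\partial x}$-terms combine with coefficients $\tfrac13-\tfrac23=-\tfrac13$, yielding \eqref{eq: f3}.

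Identity \eqref{eq: f4} is the special case underlying the skew-symmetry \eqref{eq:skew_symm}: I would rewrite $\int_\Omega u\frac{\partial v}{\partial x}v\,dx=\frac12\int_\Omega u\,\frac{\partial}{\partial x}(v^2)\,dx$ and integrate by parts once to get $-\frac12\int_\Omega \frac{\partial u}{\partial x}v^2\,dx$, so that multiplying by $2$ reproduces $-\int_\Omega v\frac{\partial u}{\partial x}v\,dx$.

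I expect no genuine obstacle, since the whole argument is elementary integration by parts; the only points requiring care are the bookkeeping of the fractional coefficients and, more importantly, the justification that each integrated-by-parts boundary term vanishes. This is precisely where the periodicity built into $X=H^1_\#(\Omega)$ is essential, and I would state it explicitly, because without it the identities would carry extra boundary contributions.
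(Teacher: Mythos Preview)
Your proposal is correct and follows essentially the same route as the paper: both first expand \eqref{eq: bterm} via the product rule into the two-term expression $\tfrac13\int \partial_x u\,vw+\tfrac23\int u\,\partial_x v\,w$, then obtain \eqref{eq: f2} and \eqref{eq: f3} by integrating one of the two terms by parts, and derive \eqref{eq: f4} from $2u\partial_x v\,v=u\,\partial_x(v^2)$. Your added remarks on periodicity and the $H^1\hookrightarrow L^\infty$ embedding are a welcome justification that the paper leaves implicit.
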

\begin{proof}
    From \eqref{eq: bterm} we have that

    \begin{equation}\label{eq: f1} 
        b(u,v,w)=\frac{2}{3}\int_{\Omega} u\frac{\partial v}{\partial x}w{\rm d}x+\frac{1}{3}\int_{\Omega} v\frac{\partial u}{\partial x}w{\rm d}x.
    \end{equation}
Applying integration by parts on the second term results in \eqref{eq: f1} becoming
    \begin{align*}\label{f2}
        b(u,v,w) &= \frac{2}{3}\int_{\Omega} u\frac{\partial v}{\partial x}w{\rm d}x- \frac{1}{3}\int_{\Omega} u\frac{\partial }{\partial x}(v w){\rm d}x \nonumber\\
        &= \frac{1}{3}\int_{\Omega} u\frac{\partial v}{\partial x}w{\rm d}x-\frac{1}{3}\int_{\Omega} v\frac{\partial w}{\partial x}u{\rm d}x.
    \end{align*}
To prove \eqref{eq: f3} we just apply integration by parts on the first term of \eqref{eq: f1} instead giving
    \beas
     b(u,v,w) &=& -\frac{2}{3}\int_{\Omega} v\frac{\partial }{\partial x}(uw){\rm d}x+\frac{1}{3}\int_{\Omega} v\frac{\partial u}{\partial x}w{\rm d}x \nonumber\\
    &=& -\frac{1}{3}\int_{\Omega} v\frac{\partial u}{\partial x}w{\rm d}x-\frac{2}{3}\int_{\Omega} v\frac{\partial w}{\partial x}u{\rm d}x.
    \eeas
Lastly, to prove \eqref{eq: f4} consider
    \begin{equation*}
    2\int_{\Omega} u\frac{\partial v}{\partial x}v{\rm d}x= \int_{\Omega} u \frac{\partial }{\partial x}(v^2) {\rm d}x =-\int_{\Omega} v\frac{\partial u}{\partial x}v{\rm d}x 
    \end{equation*}
    
\end{proof}

Lastly, let $\delt$ denote the time step, then $t^{n}
= n \delt$, $n = 0, 1, \dots, M$, and final time is $T := M \delt$. Here, and in the sequel we adopt the notation $ u(t^n,\cdot) = u^n$
\begin{eqnarray*}
 \trinorm{ u }_{\infty, k} := \max_{0 \le n \le M} \|u^{n}\|_{k},
   \hspace{1 cm}
    \trinorm{ u }_{m,k} := \left(\delt\sum_{n=0}^{M} \| u^{n} \|^{m}_{k}\right)^{1/m} .
\end{eqnarray*}

\subsection{Algorithms}
We study two algorithms. First we employ the backward Euler temporal discretization which gives the following algorithm.

\begin{alg}
\label{alg: BEGM}
For $n = 1, 2 \ldots, M$ find $\rho_{h}^{n} \in X_{h}$ such that,
\bea\label{eq: BE_alg}
\frac{1}{\delt}\lip{\rho_{h}^{n} - \rho_{h}^{n-1}}{v_h} + v_f \pare{ \pdv{x} \rho_{h}^{n},v_h} - \frac{2v_f}{\rho_m} b(\rho_{h}^{n},\rho_{h}^{n},v_h) \nonumber \\
+ \chi \delta^2 \pare{ \pdv{x} \rho_{h}^{{n}^*}, \pdv{x} v_h^* }  = 0, \: \forall v_h \in X_{h}.
\eea

\end{alg}

We also implement the time filter studied in \cite{guzel2018time,decaria2020time,DPSR_2021} as a post-processing step, which yields the following algorithm.

\begin{alg}
\label{alg: FDTFGM}
For $n = 2, 3 \ldots, M$ find $\rho_{h}^{n} \in X_{h}$ such that,\\
{\it{Step 1: \textcolor{black}{Backward Euler}}}\\
\bea\label{eq: step1_BE}
\frac{1}{\delt}\lip{\hrho_{h}^{n} - \rho_{h}^{n-1}}{v_h} + v_f \pare{ \pdv{x} \hrho_{h}^{n},v_h} - \frac{2v_f}{\rho_m} b(\hrho_{h}^{n},\hrho_{h}^{n},v_h) \nonumber \\
+ \chi \delta^2 \pare{ \pdv{x} \hrho_{h}^{{n}^*}, \pdv{x} v_h^* }  = 0, \: \forall v_h \in X_{h},
\eea
{\it{Step 2: \textcolor{black}{Time Filter}}}\\
\beas\label{eq: step2_TF}
\rho_{h}^n  = \hrho_{h}^n - \frac{\gamma}{2} \pare{ \hrho_{h}^n - 2\rho_{h}^{n-1} + \rho_{h}^{n-2} }.
\eeas
\end{alg}

Algorithm \ref{alg: FDTFGM} approximates the density $\rho^n$ from the intermediate density approximation from step 1, i.e. $\hrho^n$. \textcolor{black}{ This increases the convergence rate in time from first to second order.} 

By choosing $ \gamma = \frac{2}{3}$  in the time filter equation in step 2 we obtain for $ \hrho_h^n$ the following,
\bea
\label{eq:time_filter}
\hrho_{h}^n = \frac{3}{2} \rho_{h}^{n} - \rho_{h}^{n-1} +\frac{1}{2} \rho_{h}^{n-2}.
\eea
Replacing $\hrho_h^n$ in step 1 by (\ref{eq:time_filter}), one can reduce the two step process  to a single equation. For simplicity in notation, we introduce the interpolation and difference operators as 
\bea
I[u^{n}] &=&  \frac{3}{2} u^{n} - u^{n-1} +\frac{1}{2} u^{n-2}, \label{eq:interpolation}
\text{ and} \\
D[u^{n}] &=&  \frac{3}{2} u^{n} - 2 u^{n-1} +\frac{1}{2} u^{n-2},
\label{eq:diffrence}
\eea
as well as 
\bea
\mathcal{E}[u^n] &=& \frac{1}{4} \pare{ \norm{u^n}^2  + \norm{2u^n - u^{n-1} }^2 + \norm{ u^n - u^{n-1} }^2 }, \label{eq: epsDef} \\
\mathcal{Z}[u^n] &=& \frac{3}{4} \norm{ u^{n} -u^{n-1} - u^{n-2} }^2 \label{eq: zetadef},
\eea
which are used in the analysis of Algorithm \ref{alg: FDTFGM}.

Incorporating \eqref{eq:interpolation} and \eqref{eq:diffrence} into Algorithm \ref{alg: FDTFGM} and using (\ref{eq:time_filter}), we obtain the following equivalent scheme
\bea
\frac{1}{\delt} \lip{D[\rho_h^n]}{v_h} + v_f \pare{ \pdv{x} I[\rho_{h}^{n}],v_h} - \frac{2v_f}{\rho_m} b(I[\rho_{h}^{n}],I[\rho_{h}^{n}],v_h) \nonumber \\
+ \chi \delta^2 \pare{ \pdv{x} I[\rho_{h}^{n}]^* , \pdv{x} v_h^*  }  = 0. \:
\label{eq: FDTFFEM}
\eea
The initial condition $ \rho_h^0 $ is the $ L^2$ projection of $ \rho_0$ and $ \rho_h^1$ is obtained using backward Euler. 

\begin{lemma}[ \cite{decaria2020time} ]
    For $u$ sufficiently smooth, we have 
    \bea
        \norm{ \frac{D[u(t^{n+1})]}{\delt} - u_t(t^{n+1}) }^2 &\leq& \frac{6}{5} \delt^3 \int_{t^{n-1}}^{t^{n+1}} \norm{u_{ttt}}^2 \; dt, \label{eq: D_TF}\\
        \norm{ I[u(t^{n+1})] - u(t^{n+1})}^2 &\leq& \frac{4}{3} \delt^3 \int_{t^{n-1}}^{t^{n+1}} \norm{u_{tt}}^2 \; dt.
    \eea
\end{lemma}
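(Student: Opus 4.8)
The plan is to treat both inequalities as truncation-error estimates and derive them by Taylor expansion with the integral (Peano) remainder, exploiting the fact that the coefficients in the operators $D[\cdot]$ and $I[\cdot]$ from \eqref{eq:diffrence} and \eqref{eq:interpolation} are chosen precisely to annihilate the low-order polynomial part of the expansion. Throughout I would expand the shifted evaluations $u(t^{n})=u(t^{n+1}-\delt)$ and $u(t^{n-1})=u(t^{n+1}-2\delt)$ about the point $t^{n+1}$, reorient the resulting remainder integrals so that all limits run left-to-right over $[t^{n-1},t^{n+1}]$, and finish with the Cauchy--Schwarz inequality against the $L^2$-norm of the polynomial weights.

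For the first estimate I would expand to second order, so that the remainder carries $u_{ttt}$ (derivatives below evaluated at $t^{n+1}$):
\[
u(t^{n+1}-\delt)=u(t^{n+1})-\delt\, u_t+\tfrac{\delt^2}{2}u_{tt}+\int_{t^{n+1}}^{t^{n}}\tfrac{(t^{n}-s)^2}{2}u_{ttt}(s)\,ds,
\]
and similarly for $u(t^{n+1}-2\delt)$ with step $2\delt$. Substituting into $D[u(t^{n+1})]=\tfrac32 u(t^{n+1})-2u(t^{n})+\tfrac12 u(t^{n-1})$, the coefficients of $u(t^{n+1})$ and $u_{tt}(t^{n+1})$ vanish ($\tfrac32-2+\tfrac12=0$ and $-\delt^2+\delt^2=0$) while the coefficient of $u_t(t^{n+1})$ equals $\delt$; hence $D[u(t^{n+1})]=\delt\,u_t(t^{n+1})+R$ with $R=R_1-\tfrac14 R_2$, where $R_1=\int_{t^{n}}^{t^{n+1}}(s-t^{n})^2 u_{ttt}\,ds$ and $R_2=\int_{t^{n-1}}^{t^{n+1}}(s-t^{n-1})^2 u_{ttt}\,ds$. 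Then $\tfrac{1}{\delt}D[u(t^{n+1})]-u_t(t^{n+1})=R/\delt$, and applying $\norm{a-b}^2\le 2\norm{a}^2+2\norm{b}^2$ followed by Cauchy--Schwarz with $\int_{t^{n}}^{t^{n+1}}(s-t^{n})^4\,ds=\tfrac{\delt^5}{5}$ and $\int_{t^{n-1}}^{t^{n+1}}(s-t^{n-1})^4\,ds=\tfrac{32\delt^5}{5}$ gives $\norm{R}^2\le\pare{2\cdot\tfrac{\delt^5}{5}+\tfrac18\cdot\tfrac{32\delt^5}{5}}\int_{t^{n-1}}^{t^{n+1}}\norm{u_{ttt}}^2\,ds=\tfrac{6}{5}\delt^5\int_{t^{n-1}}^{t^{n+1}}\norm{u_{ttt}}^2\,ds$; dividing by $\delt^2$ yields \eqref{eq: D_TF} with the stated constant.

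The second estimate follows the same template one order lower. Since $I[u(t^{n+1})]-u(t^{n+1})=\tfrac12\pare{u(t^{n+1})-2u(t^{n})+u(t^{n-1})}$, a first-order expansion (remainder in $u_{tt}$) about $t^{n+1}$ again cancels the constant and $u_t$ contributions and leaves a single second-difference remainder; equivalently one writes it as $\tfrac12\int_{t^{n-1}}^{t^{n+1}}K(s)u_{tt}(s)\,ds$ with the nonnegative tent weight $K(s)=s-t^{n-1}$ on $[t^{n-1},t^{n}]$ and $K(s)=t^{n+1}-s$ on $[t^{n},t^{n+1}]$. Cauchy--Schwarz together with $\int_{t^{n-1}}^{t^{n+1}}K(s)^2\,ds=\tfrac{2}{3}\delt^3$ then controls $\norm{I[u(t^{n+1})]-u(t^{n+1})}^2$ by a constant multiple of $\delt^3\int_{t^{n-1}}^{t^{n+1}}\norm{u_{tt}}^2\,ds$, comfortably within the stated factor $\tfrac43$.

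The only real obstacle is bookkeeping: one must reorient each remainder integral correctly (the raw Taylor limits run from $t^{n+1}$ backward to $t^{n}$ and $t^{n-1}$) and verify that the low-order terms truly cancel, since the survival of any constant, $O(\delt)$, or (for the first estimate) $O(\delt^2)$ term would destroy the order of accuracy and invalidate the bound. Once the cancellations are confirmed, the elementary weight integrals $\int r^2$, $\int r^4$ and the convexity inequality $\norm{a-b}^2\le 2\norm{a}^2+2\norm{b}^2$ deliver the constants $\tfrac65$ and $\tfrac43$ with no further effort.
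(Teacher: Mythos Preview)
Your argument is correct. The paper does not prove this lemma at all---it is simply cited from \cite{decaria2020time}---so there is no ``paper's own proof'' to compare against; your Taylor-with-integral-remainder derivation is exactly the standard way to obtain such consistency bounds, and your bookkeeping checks out (the cancellations in $D[\cdot]$ and $I[\cdot]$ are as you describe, and the Cauchy--Schwarz steps give the constant $\tfrac{6}{5}$ on the nose for the first bound and $\tfrac{1}{6}\le\tfrac{4}{3}$ for the second).
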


\section{Numerical Analysis}
\label{sec: main}
First we present the numerical stability of our algorithms followed by convergence theorems.

\begin{lemma}
\label{lem: Filter Stability Lemma}
Solutions to the fully discrete Algorithm \ref{alg: BEGM} are unconditionally stable and satisfy
\beas
 \norm{\rho_h^{{M}}}^2 +
 2\chi\delta^2 \delt \sum_{n=1}^{{M}} \norm{ \pdv{x} \rho_h^{n^*} }^2 &\leq& { \norm{\rho_0}^2} .
\eeas
\end{lemma}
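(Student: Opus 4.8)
The plan is to use the standard energy method: test equation \eqref{eq: BE_alg} with $v_h = \rho_h^n$ and exploit the structural properties of each of the four terms. The first thing to note is that with this choice $v_h^* = \rho_h^{n^*}$, so the Vreman stabilization term becomes exactly $\chi\delta^2 \norm{\pdv{x}\rho_h^{n^*}}^2$, a nonnegative quantity that I want to retain on the left-hand side of the estimate.

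Next I would dispatch the remaining three terms one at a time. For the discrete time-derivative term, the polarization identity $\lip{a-b}{a} = \tfrac12\pare{\norm{a}^2 - \norm{b}^2 + \norm{a-b}^2}$ rewrites $\tfrac{1}{\delt}\lip{\rho_h^n - \rho_h^{n-1}}{\rho_h^n}$ as $\tfrac{1}{2\delt}\pare{\norm{\rho_h^n}^2 - \norm{\rho_h^{n-1}}^2 + \norm{\rho_h^n - \rho_h^{n-1}}^2}$. The linear advection term $v_f\lip{\pdv{x}\rho_h^n}{\rho_h^n}$ vanishes in the periodic setting, since $\lip{\pdv{x}\rho_h^n}{\rho_h^n} = \tfrac12\int_\Omega \pdv{x}(\rho_h^n)^2\,dx = 0$. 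The nonlinear term $-\tfrac{2v_f}{\rho_m}b(\rho_h^n,\rho_h^n,\rho_h^n)$ vanishes as well: setting $v=w=u$ in the skew-symmetry identity \eqref{eq:skew_symm} gives $2b(u,u,u)=0$, which may alternatively be read off from \eqref{eq: f4}.

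Combining these observations, multiplying through by $2\delt$, and discarding the nonnegative term $\norm{\rho_h^n - \rho_h^{n-1}}^2$ yields the one-step inequality $\norm{\rho_h^n}^2 - \norm{\rho_h^{n-1}}^2 + 2\chi\delta^2\delt\norm{\pdv{x}\rho_h^{n^*}}^2 \leq 0$. Summing from $n=1$ to $M$ telescopes the first two terms, and since $\rho_h^0$ is the $L^2$ projection of $\rho_0$ we have $\norm{\rho_h^0} \le \norm{\rho_0}$, which produces precisely the claimed bound. Notably, the estimate carries no restriction relating $\delt$ and $h$, which is exactly the unconditional stability being asserted.

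I do not expect a serious obstacle in the energy estimate itself, because the two transport-type terms cancel exactly rather than merely being controlled; the only point deserving care is the \emph{existence} of the discrete iterate $\rho_h^n$ at each time level, which, owing to the nonlinearity introduced by $b$, is not automatic. I would establish it separately through a Brouwer fixed-point argument on the finite-dimensional space $X_h$, using the very same energy identity to furnish the a priori bound that keeps the fixed-point map inside a ball and thereby closes the argument.
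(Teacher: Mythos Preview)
Your proof is correct and follows essentially the same route as the paper: test with $v_h=\rho_h^n$, kill the linear advection term by periodicity and the nonlinear term by skew-symmetry, telescope, and invoke $\norm{\rho_h^0}\le\norm{\rho_0}$. The only cosmetic difference is that you handle the time-difference term via the polarization identity (then drop $\norm{\rho_h^n-\rho_h^{n-1}}^2$), whereas the paper applies Cauchy--Schwarz and Young to $(\rho_h^{n-1},\rho_h^n)$; both yield the same one-step inequality, and your version is arguably slightly sharper since it starts from an exact identity. Your closing remark on existence via a Brouwer argument is a reasonable observation, though the paper does not address it within this lemma.
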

\begin{proof}
    Setting $v_h=\rho_h^n$ in (\ref{eq: BE_alg}) yields
        \bea
            \frac{1}{\Delta t}\norm{\rho_h^n}^2-\frac{1}{\Delta t}(\rho_h^{n-1},\rho_h^n)+\chi\delta^2\norm{\frac{\partial}{\partial x}\rho_h^{n^*}}^2=-v_f\pare{\frac{\partial}{\partial x}\rho_h^n,\rho_h^n}+\frac{2v_f}{\rho_m}b(\rho_h^n,\rho_h^n,\rho_h^n).\label{eq:var_form_rho}
        \eea
    The first term on the right-hand side vanishes due to the periodicity of functions in $X_h$, as
        \beas
            \pare{\frac{\partial}{\partial x}\rho_h^n,\rho_h^n}=\frac{1}{2}\int_\Omega\frac{\partial}{\partial x}(\rho_h^n)^2\,dx=0.
        \eeas
    Since the trilinear term $b(\cdot,\cdot,\cdot)$ is skew-symmetric by (\ref{eq:skew_symm}), the other term on the right-hand side also vanishes. Note that applying Cauchy-Schwarz inequality and Young's inequality in succession gives
        \[
            -\frac{1}{\delt}(\rho_h^{n-1},\rho_h^n)\ge-\frac{1}{2\delt}\norm{\rho_h^{n-1}}^2-\frac{1}{2\delt}\norm{\rho_h^n}^2.
        \]
    Then (\ref{eq:var_form_rho}) becomes
        \beas
            \frac{1}{2\delt}\norm{\rho_h^n}^2-\frac{1}{2\delt}\norm{\rho^{n-1}}^2+\chi\delta^2\norm{\frac{\partial}{\partial x}\rho_h^{n^*}}^2\le0.
        \eeas
    Multiplying by $2\delt$ and summing $n=1,2,\ldots, M$ results in
        \beas
            \norm{\rho_h^M}^2-\norm{\rho_h^0}^2+2\chi\delta^2\delt\sum_{n=1}^{{M}} \norm{ \pdv{x} \rho_h^{n^*} }^2\le0,
        \eeas
    and recalling that $\rho_h^0$ is the $L^2$ projection of $\rho_0$ completes the proof.
\end{proof}

From \cite{davis2024numerical} we have the following stability result for Algorithm \ref{alg: FDTFGM} with $ \gamma = \frac23 $.

\begin{lemma}
\label{Filter Stability Lemma}
The solution to Algorithm \ref{alg: FDTFGM} given by \eqref{eq: FDTFFEM} is unconditionally stable and it satisfies

\bea
 \norm{\rho_h^M}^2 + 4\sum_{n=2}^{M}\mathcal{Z}[\rho_h^n] + 2\chi\delta^2 \delt \sum_{n=2}^{M} \norm{ \pdv{x} I[\rho_h^n]^* }^2 &\leq&  C ( \rho_0 , \rho_1 ),
 \label{eq: TF_stability}
\eea
where $ \Z[\rho_h^n]$ is defined by \eqref{eq: zetadef}.
\end{lemma}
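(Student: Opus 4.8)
The plan is to run the same energy argument used for Algorithm \ref{alg: BEGM} in Lemma \ref{lem: Filter Stability Lemma}, but now testing the one-step scheme \eqref{eq: FDTFFEM} against $v_h = I[\rho_h^n]$ and invoking the G-stability identity of Lemma \ref{lem: algebra_lemma} in place of the Cauchy--Schwarz/Young step. That identity is exactly what converts the discrete time-derivative pairing $\lip{D[\rho_h^n]}{I[\rho_h^n]}$ into a telescoping difference of the energy $\mathcal{E}$ plus a nonnegative numerical-dissipation term $\mathcal{Z}$, which is the mechanism behind the claimed bound.

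Concretely, taking $v_h = I[\rho_h^n]$ I would treat the four terms of \eqref{eq: FDTFFEM} in turn. With the pointwise identifications $a = \rho_h^n$, $b = \rho_h^{n-1}$, $c = \rho_h^{n-2}$ one has $D[\rho_h^n] = \tfrac32 a - 2b + \tfrac12 c$ and $I[\rho_h^n] = \tfrac32 a - b + \tfrac12 c$, so Lemma \ref{lem: algebra_lemma}, integrated over $\Omega$, gives
\begin{equation*}
\lip{D[\rho_h^n]}{I[\rho_h^n]} = \mathcal{E}[\rho_h^n] - \mathcal{E}[\rho_h^{n-1}] + \mathcal{Z}[\rho_h^n],
\end{equation*}
with $\mathcal{E}$, $\mathcal{Z}$ as in \eqref{eq: epsDef}, \eqref{eq: zetadef}. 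The advection term vanishes by periodicity, since $\pare{\pdv{x} I[\rho_h^n], I[\rho_h^n]} = \tfrac12 \int_\Omega \pdv{x}(I[\rho_h^n])^2\,dx = 0$; the trilinear term vanishes because the skew-symmetry \eqref{eq:skew_symm} forces $b(u,v,v) = 0$, and here all three slots hold $I[\rho_h^n]$; and the stabilization term reduces to the sign-definite quantity $\chi\delta^2 \norm{\pdv{x} I[\rho_h^n]^*}^2$.

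Multiplying the resulting one-step balance by $\delt$ and summing over $n = 2, \ldots, M$ makes the $\mathcal{E}$ contributions telescope, leaving
\begin{equation*}
\mathcal{E}[\rho_h^M] + \sum_{n=2}^{M} \mathcal{Z}[\rho_h^n] + \chi\delta^2 \delt \sum_{n=2}^{M} \norm{\pdv{x} I[\rho_h^n]^*}^2 = \mathcal{E}[\rho_h^1].
\end{equation*}
Multiplying through by $4$, discarding the part of $4\mathcal{E}[\rho_h^M]$ beyond $\norm{\rho_h^M}^2$ and keeping only $2\chi\delta^2$ out of the available $4\chi\delta^2$ on the dissipation term, yields \eqref{eq: TF_stability} with $C(\rho_0,\rho_1) = 4\mathcal{E}[\rho_h^1]$, which depends only on the starting values $\rho_h^0$ and $\rho_h^1$, hence on the data.

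I do not anticipate a genuine obstacle: the nonlinear and advective contributions are annihilated algebraically, so the estimate is unconditional and needs neither a coercivity bound nor a Gronwall step. The only delicate point is the bookkeeping in Lemma \ref{lem: algebra_lemma} — one must confirm that the leftover square it generates is exactly $\mathcal{Z}[\rho_h^n]$ and that the energy pieces pair up as the consecutive difference $\mathcal{E}[\rho_h^n] - \mathcal{E}[\rho_h^{n-1}]$, since any misalignment of indices would break the telescoping and the clean right-hand side.
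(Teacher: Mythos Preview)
Your proposal is correct and is precisely the standard energy argument for this type of G-stable multistep scheme. Note that the paper does not actually prove this lemma: it is stated with a citation to \cite{davis2024numerical}, so there is no in-paper proof to compare against; your argument is the natural one and is what that reference contains.
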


Next, we will study the convergence analysis.

\begin{theorem}\label{thm:ErrorEstimate}
Let $ \ph^n $ be the solution of Algorithm \ref{alg: BEGM} and $\rho$ be a solution of \eqref{eq: problem_description} with enough regularity. Then, for sufficiently small $\delt $ we have
\beas
    && \trinorm{\rho^{M}- \ph^{M}}_{\infty,0}^2   \leq  C h^{2k+2}\trinorm{\rho}_{\infty,k+1}+ CK \bigg(\chi \delta^{4N+6} \trinorm{\pdv[\rho]{x}}^2_{2N+3}  \nonumber\\
    && +\chi h^{2k+2} \trinorm{ \rho }_{2,k+1}^2+ \chi\delta^2 h^{2k} \trinorm{ \rho}_{2,k+1}^2\nonumber \\
    &&\hspace*{-.2cm}+ \frac{v_f^2}{\delta^2} \pare{\frac{1}{\chi} +1 } h^{2k+2} \trinorm{\rho}_{2,k+1}^2 + h^{2k+2} \int_{t^0}^{t^M} \norm{\rho_t} _{k+1}^2 {\rm d}t \nonumber \\
    &&\hspace*{-.2cm}+ \delt^2 \int_{t^0}^{t^M} \norm{ \rho_{tt} }^2 \; dt 
    + \frac{v_f^2}{\rho_m^2} \pare{ 1+ \frac{1}{\chi\delta^2}{+\frac{1}{\delta^2}} } h^{2k+2} \trinorm{\rho}_{2,k+1}^2 \bigg),
\eeas
where $K = \exp\pare{ \delt \sum_{n=0}^{M} \frac{ \gamma_n }{1-\delt \gamma_n } }$ and $ \gamma_n = C\pare{ \frac{v_f}{\rho_m} +1 } $. 

\end{theorem}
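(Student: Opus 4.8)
The plan is to prove the estimate through a carefully weighted energy argument for the discrete error, using the coercive stabilization term as the mechanism that absorbs every troublesome gradient contribution. First I would decompose the error with the Brenner--Scott interpolant $I_h$ as $\rho^n - \ph^n = (\rho^n - I_h\rho^n) + (I_h\rho^n - \ph^n) =: \eta^n + \phi_h^n$, so that $\norm{\eta^n}$, $\norm{\Grad\eta^n}$, and the $W^{1,\infty}$ quantities are controlled by \eqref{eq: l2projbound}--\eqref{eq: h1projbound} and the stated regularity, while $\phi_h^n \in X_h$ is the object to be estimated. Testing the weak form of \eqref{eq: problem_description} against $v_h \in X_h$ (noting $b(\rho,\rho,v_h) = \int_\Omega \rho\,\partial_x\rho\,v_h\,dx$ by \eqref{eq: f1}) and subtracting \eqref{eq: BE_alg}, I obtain an error equation whose right-hand side collects exactly two consistency defects: the backward-Euler time defect $\tfrac{1}{\delt}(\rho^n-\rho^{n-1},v_h) - (\rho_t^n,v_h)$ and the stabilization term $\chi\delta^2(\partial_x\rho^{n*},\partial_x v_h^*)$, the latter appearing because the continuous PDE carries no stabilization.

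Next I would set $v_h = \phi_h^n$. The discrete time-derivative term yields, by polarization, the telescoping quantity $\tfrac{1}{2\delt}(\norm{\phi_h^n}^2 - \norm{\phi_h^{n-1}}^2)$; the Galerkin advection term $v_f(\partial_x\phi_h^n,\phi_h^n)$ vanishes by periodicity; and the Galerkin stabilization term produces the coercive contribution $\chi\delta^2\norm{\partial_x(\phi_h^n)^*}^2$ on the left. This term is the engine of the proof: every remaining factor carrying a gradient of $\phi_h^n$ must be split as $\phi_h^n = (\phi_h^n)^* + D_N^h\overline{\phi_h^n}^h$, with the fluctuation part $\partial_x(\phi_h^n)^*$ absorbed into the coercive term (at the cost of a factor $\chi^{-1}\delta^{-2}$) and the smooth part bounded by the Smoothing Property (Lemma \ref{lem: Smoothing_lemma}), which gives $\norm{\partial_x D_N^h\overline{\phi_h^n}^h} \le \delta^{-1}C(N)\norm{\phi_h^n}$. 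For the linear interpolation defect $v_f(\partial_x\eta^n,\phi_h^n)$ I would integrate by parts to move the derivative onto $\phi_h^n$, decompose as above, and apply Young's inequality, generating the $\tfrac{v_f^2}{\delta^2}(\tfrac{1}{\chi}+1)h^{2k+2}$ terms.

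The nonlinear term is the heart of the estimate. By trilinearity I would write $b(\rho^n,\rho^n,\phi_h^n) - b(\ph^n,\ph^n,\phi_h^n) = b(e^n,\rho^n,\phi_h^n) + b(\ph^n,e^n,\phi_h^n)$ and expand $e^n = \eta^n + \phi_h^n$. The diagonal term $b(\ph^n,\phi_h^n,\phi_h^n)$ vanishes by skew-symmetry \eqref{eq:skew_symm}, while $b(\phi_h^n,\rho^n,\phi_h^n)$ reduces through Lemma \ref{lem: B_terms} (identities \eqref{eq: f3}--\eqref{eq: f4}) to $\tfrac12\int_\Omega\partial_x\rho^n(\phi_h^n)^2\,dx \le \tfrac12\norm{\partial_x\rho^n}_{L^\infty}\norm{\phi_h^n}^2$, which feeds the Gronwall constant $\gamma_n \sim C(v_f/\rho_m + 1)$. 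The mixed terms $b(\eta^n,\rho^n,\phi_h^n)$ and $b(\ph^n,\eta^n,\phi_h^n)$ produce the residual $\tfrac{v_f^2}{\rho_m^2}(1 + \tfrac{1}{\chi\delta^2} + \tfrac{1}{\delta^2})h^{2k+2}$: after redistributing derivatives by Lemma \ref{lem: B_terms} so that no $\partial_x\eta^n$ survives (using the uniform bounds on $\norm{\partial_x\rho^n}_{L^\infty}$ and $\norm{\partial_x(\rho-I_h\rho)}_{L^\infty}$), each surviving $\partial_x\phi_h^n$ is again split and absorbed by the stabilization and Gronwall mechanisms. Finally I would bound the time-consistency defect by Taylor expansion (yielding $\delt^2\int\norm{\rho_{tt}}^2\,dt$), the stabilization consistency $\chi\delta^2\norm{\partial_x\rho^{n*}}^2$ directly by the deconvolution estimate (Lemma \ref{lem: deconvolution error}), which supplies the $\chi\delta^{4N+6}\trinorm{\partial_x\rho}^2_{2N+3} + \chi(\delta^2 h^{2k}+h^{2k+2})\trinorm{\rho}^2_{2,k+1}$ contributions, and the increment $\tfrac{1}{\delt}(\eta^n-\eta^{n-1},\phi_h^n)$ by writing $\eta^n-\eta^{n-1}=\int_{t^{n-1}}^{t^n}\eta_t\,dt$ to recover the $h^{2k+2}\int\norm{\rho_t}^2_{k+1}\,dt$ term. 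Multiplying by $2\delt$, summing, telescoping, and applying the discrete Gronwall inequality (Lemma \ref{lem:discreteGronwall}) for $\delt$ small enough that $\delt\gamma_n<1$, followed by the triangle inequality $\norm{\rho^M-\ph^M}\le\norm{\eta^M}+\norm{\phi_h^M}$ and \eqref{eq: l2projbound}, completes the argument.

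I expect the main obstacle to be the nonlinear mixed terms: because the scheme controls only the filtered gradient $\partial_x(\phi_h^n)^*$ rather than $\partial_x\phi_h^n$ itself, and because no a priori $L^\infty$ bound on the discrete solution $\ph^n$ is available, one must simultaneously juggle the skew-symmetry identities of Lemma \ref{lem: B_terms}, the smoothing estimate of Lemma \ref{lem: Smoothing_lemma}, and the $W^{1,\infty}$ interpolation bounds, all while tracking the inverse powers of $\chi$ and $\delta$ that each absorption introduces into the final residual.
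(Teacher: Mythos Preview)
Your proposal is correct and follows essentially the same route as the paper: the same error decomposition $e^n=\eta^n+\phi_h^n$ with the Brenner--Scott interpolant, testing with $v_h=\phi_h^n$, the same trilinear splitting $b(\rho^n,\rho^n,\cdot)-b(\ph^n,\ph^n,\cdot)=b(e^n,\rho^n,\cdot)+b(\ph^n,e^n,\cdot)$ with the further expansion $\ph^n=I_h^n-\phi_h^n$ inside the second factor, and the same mechanism of splitting $\phi_h^n=(\phi_h^n)^*+D_N^h\overline{\phi_h^n}^h$ together with Lemma~\ref{lem: Smoothing_lemma} to absorb gradients. The paper organizes the right-hand side as nine labeled terms $T_0,\dots,T_8$ and bounds them one by one exactly as you outline, then sums and applies Lemma~\ref{lem:discreteGronwall}; your identification of which estimate produces which residual term matches the paper term for term.
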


\begin{proof}
Evaluating the weak formulation of \eqref{eq: problem_description} at time $ t^n$ yields
\beas
   \frac{1}{\delt} \pare{\rho^n - \rho^{n-1},v_h } +  v_{f}\pare{\pdv{x} \rho^n , v_h}  - \frac{2v_f}{\rho_m} b(\rho^{n},\rho^n,v_h)
+ \chi \delta^2 \pare{ \pdv{x} \rho^{n^*} , \pdv{x} v_h^*}\nonumber \\
= \pare{ \frac{\rho^n - \rho^{n-1}}{\delt} - \pdv{t} \rho^n , v_h} + \chi \delta^2 \pare{ \pdv{x} \rho^{n^*} , \pdv{x} v_h^*}.
\eeas
Subtracting \eqref{eq: BE_alg} from the above equation gives
\beas
  \frac{1}{\delt} \pare{e^n - e^{n-1},v_h } +  v_{f}\pare{ \pdv{x} e^n , v_h} - \frac{2v_f}{\rho_m} \pare{ b(\rho^n,\rho^n,v_h) - b(\ph^n,\ph^n, v_h) } \nonumber \\
  + \chi \delta^2 \pare{ \pdv{x} e^{n^*}, \pdv{x} v_h^* } = \pare{ \frac{\rho^n - \rho^{n-1}}{\delt} - \pdv{t} \rho^n, v_h} + \chi \delta^2 \pare{ \pdv{x} \rho^{n^*} , \pdv{x} v_h^*},
\eeas
 where $ e^n = \rho^n - \rho_h^n$. Rewriting the trilinear terms by adding and subtracting  $b(\ph^n,\rho^n,v_h)$ yields
\beas
  \frac{1}{\delt} \pare{e^n - e^{n-1},v_h } +  v_{f}\pare{ \pdv{x} e^n , v_h} - \frac{2v_f}{\rho_m} \pare{ b(e^n,\rho^n,v_h) + b(\ph^n,e^n, v_h) } \nonumber \\
  + \chi \delta^2 \pare{ \pdv{x} e^{n^*}, \pdv{x} v_h^* } = \pare{ \frac{\rho^n - \rho^{n-1}}{\delt} - \pdv{t} \rho^n, v_h} + \chi \delta^2 \pare{ \pdv{x} \rho^{n^*} , \pdv{x} v_h^*}.
\eeas
Next, we decompose the error as $ e^{n} = \rho^n - I_h^n+ I_h^n- \ph^n  = \eta^{n} + \phi^{n}_{h}$, where $ I_h^n$ is the interpolant of $ \rho^n $ in $ X_h$ as defined in Section \ref{sec:Notation}.  We set $v_h=\phi_h^n$ and use $ b(\ph^n,\phi_h^n,\phi_h^n) = 0 $ to obtain
\bea
  & & \frac{1}{2\delt} \pare{ \norm{\phi_h^n}^2 - \norm{\phi_h^{n-1}}^2 } + \chi \delta^2 \norm{ \pdv{x} \phi_h^{n^*}}^2= \nonumber \\
  & & - \pare{{\frac{\eta^n - \eta^{n-1}}{\delt}}, \phi_h^n} -v_{f}\pare{ \pdv{x} \phi_h^n , \phi_h^n}- v_{f}\pare{ \pdv{x} \eta^n , \phi_h^n} \nonumber\\
  & & + \frac{2v_f}{\rho_m} \pare{ b(\eta^n,\rho^n,\phi_h^n) + b(\phi_h^n,\rho^n, \phi_h^n) + b(\ph^n,\eta^n,\phi_h^n) }- \chi \delta^2 \pare{ \pdv{x} \eta^{n^*}, \pdv{x} \phi_h^{n^*} } 
    \nonumber \\
 & & + \pare{ \frac{\rho^n - \rho^{n-1}}{\delt} - \pdv{t} \rho^n, \phi^n_h} + \chi \delta^2 \pare{ \pdv{x} \rho^{n^*} , \pdv{x} \phi_h^{n^*}} \nonumber\\
 && \leq|T_0|+ |T_1| + |T_2| + |T_3| + |T_4| + |T_5| +
  |T_6| + |T_7| + |T_8|. \label{eq: error1}
\eea
Now we bound each $T_i$ term individually. $ T_0$ is bounded as 
\bea\label{eq:t0_bound}
    |T_0| &\leq&\frac{1}{4}\norm{\frac{\eta^n-\eta^{n-1}}{\delt}}^2+\norm{\phi_h^n}^2\nonumber\\
    &\leq&\frac{1}{4}\int_\Omega\pare{\frac{1}{\delt}\int_{t^{n-1}}^{t^n}|\eta_t|dt}^2dx+\norm{\phi_h^n}^2\nonumber\\
    &\leq&\frac{1}{4\delt} \int_{t^{n-1}}^{t^n} \norm{\eta_t}^2{\rm d}t + \norm{\phi_h^n}^2.
\eea
$ T_1$ vanishes by periodicity of $\phi_h^n\in X_h$ and the fundamental theorem of Calculus   
\bea
& |T_1|& = v_f \abs{\lip{\pdv{x}\phi_{h}^n}{ \phi_{h}^n} }
=  v_f  \abs{\int_{\Omega} \frac{\partial \phi_h^n}{\partial x} \phi_h^n \, dx} \leq  v_f \int_{\Omega} \frac12 \pdv{x}  |\phi_h^n|^2 \, dx = 0 . \qquad
\label{eq: t1_bound}
\eea
Using integration by parts and $ \phi_h = \phi_h^* + D_N^h \overline{\phi_h}^h $ along with Lemma \ref{lem: Smoothing_lemma} we have
\bea\label{t2_bound}
|T_2| &\leq& v_f \abs{\pare{ \eta^n, \pdv{x} \phi_h^n } } \leq v_f \abs{\pare{ \eta^n,\pdv{x}\phi_h^{n^*} }}+ v_f \abs{\pare{ \eta^n,\pdv{x} D_N^h \overline{\phi_h^n}^h }} \nonumber \\
 &\leq& \frac{Cv_f^2}{\chi\delta^2} \norm{\eta^n}^2 + \frac{\chi\delta^2}{12} \norm{\pdv{x} \phi_h^{n^*}}^2 + v_f \norm{\eta^n} \norm{\pdv{x} D_N^h \overline{\phi_h^n}^h } \nonumber\\ 
&\leq&  \frac{Cv_f^2}{\chi\delta^2} \norm{\eta^n}^2 + \frac{\chi\delta^2}{12} \norm{\pdv{x} \phi_h^{n^*}}^2 + v_f^2 \frac{C^2(N)}{\delta^2} \norm{\eta^n}^2 + \frac{1}{4}\norm{ \phi_h^n }^2.
\eea
To bound $|T_3|$ we use Lemma \ref{lem: B_terms} equation \eqref{eq: f2}

\bea\label{eq: t3_1}
|T_3|=\frac{2v_f}{\rho_m}\abs{ b(\eta^n,\rho^n,\phi_h^n) } =  \frac{2v_f}{3\rho_m} \abs{ \int_{\Omega} \eta^n \frac{\partial \rho^{n}}{\partial x}\phi_h^n{\rm d}x - \int_{\Omega} \eta^n  \frac{\partial \phi_h^n}{\partial x}\rho^{n}{\rm d}x }.
\eea
\noindent The first term in \eqref{eq: t3_1} is bounded as
\beas\label{eq: t3_2}
\abs{ \int_{\Omega} \eta^n \frac{\partial \rho^{n}}{\partial x}\phi_h^n{\rm d}x } &\leq& \int_{\Omega} \abs{ \eta^n \frac{\partial \rho^{n}}{\partial x}\phi_h^n}{\rm d}x \nonumber  \\
&\leq&  \norm{\eta^n} \norm{\phi_h^n} \norm{\frac{\partial \rho^{n}}{\partial x}}_{L^{\infty}} \leq C \norm{\eta^n} \norm{\phi_h^n},
\eeas
\noindent and the second term in \eqref{eq: t3_1} is bounded as 
\beas\label{eq: t3_3}
\abs{\int_{\Omega} \eta^n  \frac{\partial \phi_h^n}{\partial x}\rho^{n}{\rm d}x} &\leq&  \norm{\eta^n} \norm{\frac{\partial \phi_h^n}{\partial x}} \norm{\rho^n}_{L^{\infty}} \leq C \norm{\eta^n} \norm{\frac{\partial \phi_h^n}{\partial x}} \nonumber\\
&\leq& C \norm{\eta^n} \pare{\norm{ \pdv{x} \phi_h^{n^*} } + \norm{ \pdv{x} D_N^h \overline{\phi_h^n}^h}}.
\eeas
Therefore, by triangle inequality we have that $ |T_3|$ has the bound
\bea\label{eq:t3_bound}
|T_3| \leq C \frac{2v_f}{3\rho_m} \pare{\norm{\phi_h^n} + \norm{ \pdv{x} \phi_h^{n^*} } + \norm{ \pdv{x} D_N^h \overline{\phi_h^n}^h}}\norm{\eta^n}.
\eea
To bound $|T_4|$ we use formulas \eqref{eq: f4},\eqref{eq: f1} to get
\bea
|T_4|= \frac{2v_f}{\rho_m} |b(\phi_h^n,\rho^n, \phi_h^n) | = \frac{2v_f}{\rho_m} \abs{ \frac{2}{3} \int_{\Omega} \phi_h^n \frac{\partial \rho^{n}}{\partial x}\phi_h^n {\rm d} x+\frac{1}{3}\int_{\Omega} \rho^{n} \frac{\partial \phi_h^n}{\partial x}\phi_h^n {\rm d} x }\nonumber \\
\leq \frac{v_f}{\rho_m} \int_{\Omega} \abs{ \phi_h^n \frac{\partial \rho^{n}}{\partial x}\phi_h^n } {\rm d} x \leq \frac{v_f}{\rho_m} \norm{\frac{\partial \rho^{n}}{\partial x}}_{L^{\infty}} \norm{\phi_h^n}^2\leq \frac{v_f}{\rho_m}C  \norm{\phi_h^n}^2.
\eea
Using $\ph^n=I_h^n-\phi_h^n$ we obtain
\begin{equation}\label{eq4}
|T_5|= \frac{2v_f}{\rho_m} \abs{b(\ph^n,\eta^n,\phi_h^n)} = \frac{2v_f}{\rho_m}\abs{ b(I_h^n,\eta^n,\phi_h^n)-b(\phi_h^n,\eta^n,\phi_h^n) }.
\end{equation}
We estimate each term on the right hand side of \eqref{eq4} separately.
Starting with the first term, we can use formula \eqref{eq: f3} and bound similarly to $ |T_3|$
\bea\label{eq: t5b1}
\abs{ b(I_h^n,\eta^n,\phi_h^n) } &\leq& \frac{1}{3}\int_{\Omega} \abs{ \eta^n \frac{\partial I_h^n}{\partial x}\phi_h^n } {\rm d} x + \frac{2}{3}\int_{\Omega} \abs{ \eta^n \frac{\partial \phi_h^n }{\partial x}I_h^n} {\rm d} x \nonumber \\
&\leq&  \frac{1}{3} \norm{\eta^n} \norm{\phi_h^n} \norm{\pdv{x} I_h^n }_{L^\infty} + \frac{2}{3} \norm{ \eta^n } \norm{\pdv{x} \phi_h^n} \norm{I_h^n}_{L^\infty} \nonumber \\
&\leq&   C \pare{ \norm{\phi_h^n} + \norm{\pdv{x} \phi_h^{n^*} }+ \norm{\pdv{x} D_N^h \overline{\phi_h^n}^h } }  \norm{\eta^n}.
\eea
Following the same approach as in $|T_4|$,  the second term is bounded as
\bea\label{eq: t5b2}
\frac{2v_f}{\rho_m}\abs{ b(\phi_h^n, \eta^n, \phi_h^n) } \leq \frac{v_f}{\rho_m} \norm{\pdv{x} \eta^n }_{L^{\infty}} \norm{\phi_h^n}^2 \leq \frac{v_f}{\rho_m} C \norm{\phi_h^n}^2.
\eea
Combining \eqref{eq: t5b1} and \eqref{eq: t5b2} yields
\bea\label{t5_bound}
|T_5| \leq  C \frac{2v_f}{\rho_m} \pare{ \norm{\phi_h^n} + \norm{\pdv{x} \phi_h^{n^*} }+ \norm{\pdv{x} D_N^h \overline{\phi_h^n}^h } }  \norm{\eta^n} + \frac{v_f}{\rho_m} C \norm{\phi_h^n}^2.
\eea
To bound $ |T_6|$ we simply apply the Cauchy-Schwarz inequality followed by Young's inequality
\bea\label{t6_bound}
|T_6| \leq \chi\delta^2 \norm{\pdv{x} \eta^{n^*}} \norm{\pdv{x} \phi_h^{n^*}} \leq 3 \chi\delta^2 \norm{\pdv{x} \eta^{n^*}}^2 + \frac{\chi\delta^2}{12} \norm{\pdv{x} \phi_h^{n^*}}^{2}.
\eea
$ |T_7|$ is bounded with Cauchy-Schwarz and Young's inequalities, and an application of Taylor's remainder formula resulting in
\bea
 |T_7| & \leq & \frac{1}{4}\norm{\phi_h^{n}}^2 + \norm{\rho_t^{n}-\frac{1}{\Delta t}(\rho^{n}-\rho^{n-1})}^2 \nonumber \\
  & \leq & \frac{1}{4}\norm{\phi_h^{n}}^2 + C  \delt \int_{t^{n-1}}^{t^{n}} \norm{\rho_{tt}}^2 \, dt \label{t7_bound}.
\eea
Lastly, $|T_8|$ is bounded as
\bea
|T_8| &\leq& \chi\delta^2 \norm{\pdv{x} \rho^{n^*}} \norm{\pdv{x} \phi_h^{n^*}} \leq 3 \chi \delta^2 \norm{\pdv{x} \rho^{n^*}}^2 + \frac{\chi\delta^2}{12} \norm{\pdv{x} \phi_h^{n^*}}^{2}. \label{eq: t8_bound}
\eea
Applying the bounds \eqref{eq:t0_bound} - \eqref{eq: t8_bound} to \eqref{eq: error1} and simplifying results in
\bea\label{eq: error2}
  & &\frac{1}{2\delt} \pare{ \norm{\phi_h^n}^2 - \norm{\phi_h^{n-1}}^2 } + \frac{3\chi \delta^2}{4} \norm{ \pdv{x} \phi_h^{n^*}}^2 \leq 
  3\chi\delta^2 \pare{ \norm{\pdv{x} \rho^{n^*} }^2 + \norm{ \pdv{x} \eta^{n^*}}^2} \nonumber \\
   +&  &C\frac{v_f^2}{\delta^2} \pare{\frac{1}{\chi} +1 }\norm{\eta^n}^2 + \pare{ C\frac{v_f}{\rho_m} + \frac{3}{2} } \norm{\phi_h^n}^2 + C \delt \int_{t^{n-1}}^{t^n} \norm{ \rho_{tt} }^2 \; dt \nonumber \\
  +& &C \frac{v_f}{\rho_m} \pare{ \norm{\phi_h^n} + \norm{\pdv{x} \phi_h^{n^*} }+ \norm{\pdv{x} D_N^h \overline{\phi_h^n}^h } }  \norm{\eta^n} +{ \frac{1}{4\delt} \int_{t_{n-1}}^{t_n} \norm{\eta_t}^2{\rm d}t }. 
\eea
We further bound the last terms in \eqref{eq: error2} using Young's inequality and Lemma \ref{lem: Smoothing_lemma} giving
\bea\label{eq: lastterm}
&&C \frac{v_f}{\rho_m} \pare{ \norm{\phi_h^n} + \norm{\pdv{x} \phi_h^{n^*} }+ \norm{\pdv{x} D_N^h \overline{\phi_h^n}^h } }  \norm{\eta^n} \nonumber\\
&\leq&\frac{1}{4} \norm{\phi_h^n}^2  +  \frac{Cv_f^2}{\rho_m^2}  \norm{\eta^n }^2 + \frac{\chi \delta^2}{4} \norm{\pdv{x} \phi_h^{n^*}}^2 +\nonumber\\
&&\frac{Cv_f^2}{\rho_m^2 \chi \delta^2 } \norm{\eta^n }^2 + \frac{1}{4} \norm{\phi_h^n}^2 + \frac{Cv_f^2}{\rho_m^2 \delta^2} \norm{\eta^n}^2 .
\eea
Plugging in \eqref{eq: lastterm} into \eqref{eq: error2} yields
\bea\label{eq: error3}
  & &\frac{1}{2\delt} \pare{ \norm{\phi_h^n}^2 - \norm{\phi_h^{n-1}}^2 } + \frac{\chi \delta^2}{2} \norm{ \pdv{x} \phi_h^{n^*}}^2 \leq 
  3\chi\delta^2 \pare{ \norm{\pdv{x} \rho^{n^*} }^2 + \norm{ \pdv{x} \eta^{n^*}}^2} \nonumber \\
   +&  &C\frac{v_f^2}{\delta^2} \pare{\frac{1}{\chi} +1 }\norm{\eta^n}^2 + \pare{ C\frac{v_f}{\rho_m} + { 2} } \norm{\phi_h^n}^2 + C \delt \int_{t^{n-1}}^{t^n} \norm{ \rho_{tt} }^2 \; dt \nonumber \\
  +& &C \frac{v_f^2}{\rho_m^2} \pare{ 1+ \frac{1}{\chi\delta^2}{+\frac{1}{\delta^2}} }  \norm{\eta^n}^2+{ \frac{1}{4\delt} \int_{t_{n-1}}^{t_n} \norm{\eta_t}^2{\rm d}t }.
\eea
Multiplying by $2 \Delta t$, summing up from $n=1$ to $M$, and recalling that $\norm{\phi_h^0}=0$ gives
\bea
    & &\norm{\phi_h^M}^2 + \chi \delta^2 \delt \sum_{n=1}^{M} \norm{ \pdv{x} \phi_h^{n^*}}^2\nonumber\\
    &\leq& 6 \chi\delta^2 \delt \sum_{n=1}^{M} \pare{ \norm{\pdv{x} \rho^{n^*} }^2 + \norm{ \pdv{x} \eta^{n^*}}^2} +C\frac{v_f^2}{\delta^2} \pare{\frac{1}{\chi} +1 } \delt \sum_{n=1}^{M} \norm{\eta^n}^2\nonumber \\
    && + \pare{ C\frac{v_f}{\rho_m} + { 2}}\delt \sum_{n=1}^{M} \norm{\phi_h^n}^2 +C \delt^2 \int_{t^0}^{t^M} \norm{ \rho_{tt} }^2 \; dt\nonumber \\
    &&+C \frac{v_f^2}{\rho_m^2} \pare{ 1+ \frac{1}{\chi\delta^2}{ +\frac{1}{\delta^2}} } \delt \sum_{n=1}^{M} \norm{\eta^n}^2+{ \frac{1}{2} \int_{t^0}^{t^M} \norm{\eta_t}^2{\rm d}t }.  \qquad
\eea
Using the standard interpolation inequalities \eqref{eq: l2projbound}, \eqref{eq: h1projbound} and Lemma \ref{lem: deconvolution error}, we have 
\beas
    & &\norm{\phi_h^M}^2 + \chi \delta^2 \trinorm{ \pdv{x} \phi_h^{n^*}}_{2,0}^2 \leq \chi \delta^{4N+6} \trinorm{\pdv[\rho]{x}}^2_{2,2N+3}\nonumber\\
    &+&C(N)\chi(\delta^2 h^{2k} + h^{2k+2} ) \trinorm{ \rho }_{2,k+1}^2  \nonumber \\
  & + & 6 \chi\delta^2 h^{2k} \trinorm{ \rho}_{2,k+1}^2  + C\frac{v_f^2}{\delta^2} \pare{\frac{1}{\chi} +1 } h^{2k+2} \trinorm{\rho}_{2,k+1}^2\nonumber \\
    &+&C \pare{ \frac{v_f}{\rho_m} + 1 } \delt \sum_{n=1}^{M} \norm{\phi_h^n}^2 +C \delt^2 \int_{t^0}^{t^M} \norm{ \rho_{tt} }^2 \; dt \nonumber \\
    &+&
    C \frac{v_f^2}{\rho_m^2} \pare{ 1+ \frac{1}{\chi\delta^2}{+\frac{1}{\delta^2}} } h^{2k+2} \trinorm{\rho}_{2,k+1}^2    +C h^{2k+2} \int_{t^0}^{t^M} \norm{\rho_t} _{k+1}^2 {\rm d}t  . 
\eeas
Thus, with a sufficiently small time step ($ \gamma_n\delt :=  C(\frac{v_f}{\rho_m}+ 1)\delt\leq 1$), applying the Gronwall inequality from Lemma \ref{lem:discreteGronwall} results in
 \beas
    & &\hspace*{-.2cm}\norm{\phi_h^M}^2 + \chi \delta^2 \trinorm{ \pdv{x} \phi_h^{n^*}}_{2,0}^2\hspace*{-.2cm}\leq C\exp\pare{{\delt \sum_{n=0}^{M} \frac{\gamma}{1- \delt \gamma_n} }}\bigg( \chi \delta^{4N+6} \trinorm{\pdv[\rho]{x}}^2_{2,2N+3}\nonumber\\
    &&\hspace*{-.2cm} +\chi h^{2k+2}\trinorm{ \rho }_{2,k+1}^2+ \chi\delta^2 h^{2k} \trinorm{ \rho}_{2,k+1}^2\nonumber \\
    &&\hspace*{-.2cm}+ \frac{v_f^2}{\delta^2} \pare{\frac{1}{\chi} +1 } h^{2k+2} \trinorm{\rho}_{2,k+1}^2 +  h^{2k+2} \int_{t^0}^{t^M} \norm{\rho_t} _{k+1}^2 {\rm d}t \nonumber \\
    &&\hspace*{-.2cm}+ \delt^2 \int_{t^0}^{t^M} \norm{ \rho_{tt} }^2 \; dt 
    + \frac{v_f^2}{\rho_m^2} \pare{ 1+ \frac{1}{\chi\delta^2}{+\frac{1}{\delta^2}} } h^{2k+2} \trinorm{\rho}_{2,k+1}^2 \bigg). 
\eeas
Employing triangle inequality completes the proof.
\end{proof}

\begin{remark}\label{remark: convergence_rate}
For a smooth solution $\rho$ and dropping higher order terms, the above error estimate becomes
\begin{equation*}
 \trinorm{\rho - \rho_h}^2_{\infty,0} = O\pare{ \Delta t^2 + {\chi\delta^{4N+6}} + \chi\delta^2 h^{2k} + \chi^{-1}\delta^{-2} h^{2k+2} + \delta^{-2}h^{2k+2} }.
 \end{equation*}
Hence, to obtain the highest rates we choose $ \delta=h^{0.5}$, i.e.,

\begin{equation*}
 \trinorm{\rho - \rho_h}^2_{\infty,0} = O\pare{\delt^2 + \chi h^{2N+3} +(\chi+\chi^{-1}+1) h^{2k+1} } .
 \end{equation*}

\end{remark}

Next we will prove the convergence analysis for solutions of Algorithm \ref{alg: FDTFGM}.

\begin{theorem}\label{cor: Timefilter Lemma}
Let $ \ph^n $ be the solution of Algorithm \ref{alg: FDTFGM} given by \eqref{eq: FDTFFEM} and $\rho$ be a solution of \eqref{eq: problem_description} with enough regularity. Then, for sufficiently small $\delt $ we have

\beas
 &&\frac{1}{4} \pare{ \norm{e^M}^2  + \norm{2e^M - e^{M-1} }^2 + \norm{ e^M - e^{M-1} }^2 }  \nonumber \\
&+& \frac{3}{4}\sum_{n=2}^{M} \norm{ e^{n} - e^{n-1} - e^{n-2} }^2 \leq  CK \bigg(   h^{2k+2} \trinorm{\rho}_{\infty,k+1}^{2}  \nonumber \\
&+& \chi(\delta^2 h^{2k} + h^{2k+2} ) \trinorm{ \rho }_{2,k+1}^2  +\chi \delta^{4N+6} \trinorm{\pdv[\rho]{x}}^2_{2,2N+3}   \nonumber \\
&+&  \chi \delta^2 h^{2k} \trinorm{ \rho  }_{2,k+1}^2  + \frac{ v_f^2}{\delta^2} \pare{\frac{1}{\chi} +1 } h^{2k+2} \trinorm{\rho}_{2,k+1}^2  \nonumber \\
&+&  \frac{ v_f^2}{\rho_m^2} \pare{ 1+ \frac{1}{\chi\delta^2} + \frac{1}{\delta^2}} h^{2k+2} \trinorm{\rho}_{2,k+1}^2   + \delt^4 \int_{t^{0}}^{t^{M}} \norm{\rho_{ttt}}^2 \; dt \nonumber \\
&+&  \pare{v_f^2 + \frac{v_f}{\rho_m}} \delt^4 \int_{t^{0}}^{t^M} \norm{\pdv{x} \rho_{tt} }^2  dt  
+ \frac{ v_f}{\rho_m}  \delt ^4 \int_{t^{0}}^{t^M} \norm{\rho_{tt}}^2 \; dt \bigg).
\eeas
 where $ e^n = \rho^n - \rho_h^n$, and $K = \exp\pare{ \delt \sum_{n=0}^{M} \frac{ \gamma_n }{1-\delt \gamma_n } }$ and $ \gamma_n = C\pare{ \frac{v_f}{\rho_m} +1 } $.
\end{theorem}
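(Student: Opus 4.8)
The plan is to mirror the proof of Theorem \ref{thm:ErrorEstimate}, replacing the backward Euler difference quotient with the time-filtered operators $D[\cdot]$ and $I[\cdot]$ from \eqref{eq:interpolation}--\eqref{eq:diffrence}, and exchanging the simple energy telescoping for the one supplied by the algebraic identity in Lemma \ref{lem: algebra_lemma}. First I would evaluate the weak form of \eqref{eq: problem_description} and rewrite it to match the one-step scheme \eqref{eq: FDTFFEM}, writing $\rho_t(t^n) = \frac{1}{\delt}D[\rho^n] + \tau_1^n$ and $\rho^n = I[\rho^n] + \tau_2^n$, where $\tau_1^n,\tau_2^n$ are the temporal consistency residuals controlled by the consistency lemma \eqref{eq: D_TF}. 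Subtracting \eqref{eq: FDTFFEM} then yields an error equation for $e^n = \rho^n - \ph^n$ in which the advection and trilinear contributions are evaluated at $I[\cdot]$. As before I split $e^n = \eta^n + \phi_h^n$ with $\eta^n = \rho^n - I_h^n$ and $\phi_h^n = I_h^n - \ph^n$, and rewrite the nonlinear difference as $b(I[\rho^n],I[\rho^n],\cdot) - b(I[\ph^n],I[\ph^n],\cdot) = b(I[e^n],I[\rho^n],\cdot) + b(I[\ph^n],I[e^n],\cdot)$, further expanding $I[e^n] = I[\eta^n] + I[\phi_h^n]$.

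The crucial step is to test with $v_h = I[\phi_h^n]$. The time-derivative term becomes $\frac{1}{\delt}(D[\phi_h^n], I[\phi_h^n])$, and Lemma \ref{lem: algebra_lemma}, applied with $a=\phi_h^n$, $b=\phi_h^{n-1}$, $c=\phi_h^{n-2}$ and polarized to the $L^2$ inner product, converts this into $\frac{1}{\delt}\left(\E[\phi_h^n] - \E[\phi_h^{n-1}] + \Z[\phi_h^n]\right)$, precisely the quantities in \eqref{eq: epsDef}--\eqref{eq: zetadef}. The linear advection term $v_f(\pdv{x} I[\phi_h^n], I[\phi_h^n])$ vanishes by periodicity as in \eqref{eq: t1_bound}, the skew term $b(I[\ph^n], I[\phi_h^n], I[\phi_h^n])$ vanishes by \eqref{eq:skew_symm}, and the stabilization term leaves $\chi\delta^2 \norm{\pdv{x} I[\phi_h^n]^*}^2$ on the left-hand side.

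Next I would bound the right-hand side term by term. The spatially-driven contributions (the interpolation error $\eta^n$, the $\chi\delta^2$ stabilization consistency, and the trilinear remainders) are estimated exactly as $T_2$ through $T_6$ and $T_8$ of Theorem \ref{thm:ErrorEstimate}, now carrying $I[\phi_h^n]$ in place of $\phi_h^n$; here I reuse Lemma \ref{lem: B_terms} to rewrite the trilinear terms, the $L^\infty$ interpolation bounds on $\pdv{x}\rho^n$ and $\pdv{x} I_h^n$, Lemma \ref{lem: Smoothing_lemma} to absorb the deconvolved piece $\pdv{x} D_N^h\overline{I[\phi_h^n]}^h$, and Young's inequality to hide every $\chi\delta^2\norm{\pdv{x} I[\phi_h^n]^*}^2$ factor into the dissipation on the left. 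The genuinely new terms are the two temporal residuals: $\frac{1}{\delt}(D[\rho^n] - \delt\,\rho_t^n, I[\phi_h^n])$ is controlled by \eqref{eq: D_TF} and, after multiplication by $2\delt$ and summation, produces the $\delt^4\int\norm{\rho_{ttt}}^2$ term; the time-interpolation residual $\rho^n - I[\rho^n]$ feeds the advection and trilinear terms and, upon differentiating in $x$ and applying the second estimate of the same consistency lemma, produces the $\delt^4\int\norm{\pdv{x}\rho_{tt}}^2$ and $\delt^4\int\norm{\rho_{tt}}^2$ contributions weighted by $v_f^2$ and $\frac{v_f}{\rho_m}$.

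Finally I would multiply through by $2\delt$, sum from $n=2$ to $M$, telescope the $\E[\phi_h^n]$ differences to $\E[\phi_h^M] - \E[\phi_h^1]$ (with $\E[\phi_h^1]$ absorbed into the initial-data terms through the backward Euler initialization of $\rho_h^1$), and retain $\sum_{n=2}^M \Z[\phi_h^n]$. After collecting the quadratic-in-$\phi_h^n$ pieces into a $\delt\sum_n \gamma_n \E[\phi_h^n]$ form with $\gamma_n = C(\frac{v_f}{\rho_m}+1)$, the discrete Gronwall inequality of Lemma \ref{lem:discreteGronwall} supplies the factor $K$. The spatial norms are then converted to the stated $h$-powers via \eqref{eq: l2projbound}, \eqref{eq: h1projbound} and Lemma \ref{lem: deconvolution error}, and a triangle inequality passes from $\phi_h$ back to $e = \eta + \phi_h$. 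The main obstacle I anticipate is the bookkeeping of the nonlinear trilinear terms under the interpolation operator $I[\cdot]$ combined with the two-fold error splitting, and in particular arranging that the quadratic-in-$\phi_h^n$ pieces arising there can be absorbed into the $\gamma_n \E[\phi_h^n]$ form demanded by Lemma \ref{lem:discreteGronwall} rather than into the dissipation, so that the Gronwall step closes; a secondary technical point is verifying that the polarized identity of Lemma \ref{lem: algebra_lemma} cleanly reproduces $\E$ and $\Z$ when the time-filtered test function $I[\phi_h^n]$ is paired against $D[\phi_h^n]$.
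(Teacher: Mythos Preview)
Your proposal is correct and follows essentially the same route as the paper: test the error equation with $v_h = I[\phi_h^n]$, convert $(D[\phi_h^n],I[\phi_h^n])$ to $\E[\phi_h^n]-\E[\phi_h^{n-1}]+\Z[\phi_h^n]$ via Lemma~\ref{lem: algebra_lemma}, reuse the $T_2$--$T_6$, $T_8$ bounds of Theorem~\ref{thm:ErrorEstimate} with $I[\phi_h^n]$ in place of $\phi_h^n$, control the temporal consistency residuals by the $\delt^3$ estimates, sum, and close with Gronwall. Your explicit observation that $\norm{I[\phi_h^n]}^2$ must be re-expressed in terms of the $\E$-quantities to fit the hypothesis of Lemma~\ref{lem:discreteGronwall} is a detail the paper leaves implicit.
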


\begin{proof}
Evaluating the weak formulation of \eqref{eq: problem_description} at time $ t^n$ yields
\bea
\frac{1}{\delt}\pare{D[\rho^n], \vh } + v_f \pare{ \pdv{x} I[\rho^{n}],v_h} - \frac{2v_f}{\rho_m} b(I[\rho^{n}],I[\rho^{n}],v_h) \nonumber \\
+ \chi \delta^2 \pare{ \pdv{x} I[\rho^{n}]^* , \pdv{x} v_h^*  } = \tau(\rho^n,\vh), \label{eq: true_sol}
\eea
where the consistency error, $\tau$, is given by
\beas
\tau(\rho^n,\vh) := \pare{ \frac{D[\rho^n]}{\delt} - \rho_t^n  ,\vh} + v_f \pare{ \pdv{x} \pare{I[\rho^{n}] - \rho^n },v_h} \nonumber \\
+ \frac{2v_f}{\rho_m} \pare{b(\rho^{n},\rho^{n},v_h) - b(I[\rho^{n}],I[\rho^{n}],v_h)} + \chi \delta^2 \pare{ \pdv{x} I[\rho^{n}]^* , \pdv{x} v_h^*  }.
\eeas
Subtracting \eqref{eq: FDTFFEM} from \eqref{eq: true_sol} gives the error equation,
\bea
\frac{1}{\delt}\pare{D[e^n], \vh } + v_f \pare{ \pdv{x} I[e^{n}],v_h} - \frac{2v_f}{\rho_m} b(I[e^{n}],I[\rho^{n}],v_h) \nonumber \\
- \frac{2v_f}{\rho_m} b(I[\rho_h^n],I[e^n],\vh) + \chi \delta^2 \pare{ \pdv{x} I[e^{n}]^* , \pdv{x} v_h^*  } = \tau(\rho^n,\vh) \label{eq: TF_error1}.
\eea
We again decompose the error as $ e^{n} = \rho^n - I_h^n+ I_h^n- \ph^n  = \eta^{n} + \phi^{n}_{h}$ and pick $ \vh = I[\phi_h^n]$ so that \eqref{eq: TF_error1} becomes
\bea
& &\frac{1}{\delt}\pare{D[\phi_h^n], I[\phi_h^{n}] } + \chi \delta^2 \norm{ \pdv{x} I[\phi_h^{n}]^* }^2 =  -\frac{1}{\delt}\pare{D[\eta^n],  I[\phi_h^{n}] } \nonumber \\
&-&  v_f \pare{ \pdv{x} I[\phi_h^{n}],  I[\phi_h^{n}] } - v_f \pare{ \pdv{x} I[\eta^{n}],  I[\phi_h^{n}] } \nonumber \\
&+& \frac{2v_f}{\rho_m} \pare{ b(I[\eta^{n}],I[\rho^{n}], I[\phi_h^{n}])  +  b(I[\phi_h^{n}],I[\rho^{n}], I[\phi_h^{n}]) + b(I[\rho_h^n],I[\eta^n], I[\phi_h^{n}]) } \nonumber \\
&-& \chi \delta^2 \pare{ \pdv{x} I[\eta^{n}]^* , \pdv{x}  I[\phi_h^{n}]^*  } + \tau(\rho^n, I[\phi_h^{n}]) \label{eq: TF_error2}
\eea

We now bound the first and last terms on the right-hand side, while the rest of them are handled similarly to the ones in Theorem \ref{thm:ErrorEstimate}.

Using Lemma \ref{lem: algebra_lemma} for the first term, we have
\beas \label{eq: timeterm}
\pare{D[\phi_h^n], I[\phi_h^{n}] } =  \E[\phi_h^n] - \E[\phi_h^{n-1}] + \Z[\phi_h^n].
\eeas

For the last term, the consistency error $ \tau(\rho^n, I[\phi_h^{n}])$, we have the following bounds
\beas
\pare{ \frac{D[\rho^n]}{\delt} - \rho_t^n  , I[\phi_h^{n}]} &\leq& \frac{1}{4\eps} \norm{ \frac{D[\rho^n]}{\delt} - \rho_t^n  }^2 + \eps \norm{I[\phi_h^{n}]}^2 \nonumber \\
&\leq&  \frac{1}{4\eps} \frac{6}{5} \delt^3 \int_{t^{n-2}}^{t^{n}} \norm{\rho_{ttt}}^2 \; dt + \eps \norm{I[\phi_h^{n}]}^2,
\eeas

\beas
v_f \pare{ \pdv{x} \pare{I[\rho^{n}] - \rho^n }, I[\phi_h^{n}] } &\leq& v_f^2 \frac{1}{4\eps} \norm{\pdv{x} \pare{I[\rho^{n}] - \rho^n} }^2 + \eps \norm{  I[\phi_h^{n}] }^2 \nonumber\\
& \leq&  \frac{v_f^2}{3\eps} \delt^3 \int_{t^{n-2}}^{t^n} \norm{\pdv{x} \rho_{tt} }^2  dt + \eps \norm{  I[\phi_h^{n}] }^2.  
\eeas
The trilinear terms are bounded similarly to $T_4$
\beas
& & b(\rho^n,\rho^n,I[\phi_h^n]) - b(I[\rho^n],I[\rho^n],I[\phi_h^n])  \nonumber\\
& = &  b(\rho^n - I[\rho^n  ],\rho^n,I[\phi_h^n]) - b(I[\rho^n],\rho^n - I[\rho^n],I[\phi_h^n]) \nonumber \\
& \leq & \norm{\rho^n - I[\rho^n]} \norm{\pdv{x} \rho^n}_{L^\infty} \norm{I[\phi_h^n] } + {\norm{I[\rho^n]}_{L^\infty}} \norm{ \pdv{x} \pare{\rho^n - I[\rho^n]}} \norm{I[\phi_h^n]} \nonumber \\
& \leq & \norm{\rho^n - I[\rho^n]} C \norm{I[\phi_h^n] } + C \norm{\pdv{x}\pare{\rho^n - I[\rho^n]}} \norm{I[\phi_h^n]} \nonumber \\
& \leq & \frac{C}{\eps} \norm{\rho^n - I[\rho^n]}^2  + \frac{C}{\eps}\norm{\pdv{x}\pare{\rho^n - I[\rho^n]}}^2  + \eps \norm{I[\phi_h^n]}^2 \nonumber \\
& \leq & \frac{C}{\eps} \delt ^3 \int_{t^{n-2}}^{t^n} \norm{\rho_{tt}}^2 \; dt  + \frac{C}{\eps} \delt^3 \int_{t^{n-2}}^{t^{n}} \norm{\pdv{x}\rho_{tt} }^2 \; dt + \eps \norm{I[\phi_h^n]}^2,
\eeas

\beas
\chi \delta^2 \pare{ \pdv{x} I[\rho^{n}]^* , \pdv{x} I[\phi_h^n]^*  } &\leq& \chi \delta^2 \norm{ \pdv{x} I[\rho^{n}]^*} \norm{ \pdv{x} I[\phi_h^n]^*  } \nonumber \\
& \leq& \frac{\chi \delta^2}{4 \eps} \norm{ \pdv{x} I[\rho^{n}]^*}^2 + \eps \chi \delta^2  \norm{ \pdv{x} I[\phi_h^n]^*  }^2.
\eeas
Combining all the previous bounds into \eqref{eq: TF_error2} and setting $\eps = \frac{1}{3}$ gives
\beas
\frac{1}{\delt}\pare{\E[\phi_h^n] - \E[\phi_h^{n-1}] + \Z[\phi_h^n]} + \frac{\chi \delta^2}{4} \norm{ \pdv{x} I[\phi_h^{n}]^* }^2 \leq \nonumber \\
C \chi \delta^2  \pare{\norm{\pdv{x} I[\rho^{n}]^* }^2 + \norm{ \pdv{x} I[\eta^{n}]^*  }^2 } + \frac{C v_f^2}{\delta^2} \pare{\frac{1}{\chi} +1 }\norm{I[\eta^n]}^2 \nonumber \\
+  \frac{C v_f^2}{\rho_m^2} \pare{ 1+ \frac{1}{\chi\delta^2} + \frac{1}{\delta^2}}  \norm{I[\eta^n]}^2 + C\pare{ \frac{ v_f}{\rho_m} + 1 } \norm{I[\phi_h^n]}^2 \nonumber \\
+ C \delt^3 \int_{t^{n-2}}^{t^{n}} \norm{\rho_{ttt}}^2 \; dt + C \pare{v_f^2 + \frac{v_f}{\rho_m}} \delt^3 \int_{t^{n-2}}^{t^n} \norm{\pdv{x} \rho_{tt} }^2  dt  \nonumber \\
+ \frac{C v_f}{\rho_m}  \delt ^3 \int_{t^{n-2}}^{t^n} \norm{\rho_{tt}}^2 \; dt \label{eq: TF_error3}.
\eeas

Multiplying by $ 4 \delt$ and summing from $n=2,\dots M$ yields 

\beas
4(\E[\phi^M] - \E[\phi^{1}]) + 4\delt \sum_{n=2}^{M} \Z[\phi^n] + \chi \delta^2 \delt\sum_{n=2}^M \norm{ \pdv{x} I[\phi_h^{n}]^* }^2 \leq \nonumber \\
C \chi \delta^2  \delt \sum_{n=2}^{M} \pare{\norm{\pdv{x} I[\rho^{n}]^* }^2 + \norm{ \pdv{x} I[\eta^{n}]^*  }^2 } + \frac{C v_f^2}{\delta^2} \pare{\frac{1}{\chi} +1 } \delt \sum_{n=2}^{M} \norm{I[\eta^n]}^2 \nonumber \\
+  \frac{C v_f^2}{\rho_m^2} \pare{ 1+ \frac{1}{\chi\delta^2} + \frac{1}{\delta^2}} \delt \sum_{n=2}^{M} \norm{I[\eta^n]}^2 \nonumber \\+ C\pare{ \frac{ v_f}{\rho_m} + 1 } \delt \sum_{n=2}^{M} \norm{I[\phi_h^n]}^2 
 + C \delt^4 \int_{t^{0}}^{t^{M}} \norm{\rho_{ttt}}^2 \; dt \nonumber \\
+ C \pare{v_f^2 + \frac{v_f}{\rho_m}} \delt^4 \int_{t^{0}}^{t^M} \norm{\pdv{x} \rho_{tt} }^2  dt  
+ \frac{C v_f}{\rho_m}  \delt ^4 \int_{t^{0}}^{t^M} \norm{\rho_{tt}}^2 \; dt \label{eq: TF_error4}.
\eeas
Applying the standard interpolation inequalities \eqref{eq: l2projbound}, \eqref{eq: h1projbound}, and Lemma \ref{lem: deconvolution error}, we have
\beas
4(\E[\phi^M] - \E[\phi^{1}]) + 4\delt \sum_{n=2}^{M} \Z[\phi^n] + \chi \delta^2 \delt\sum_{n=2}^M \norm{ \pdv{x} I[\phi_h^{n}]^* }^2 \leq \nonumber \\
 + C(N)\chi(\delta^2 h^{2k} + h^{2k+2} ) \trinorm{ \rho }_{2,k+1}^2  +\chi \delta^{4N+6} \trinorm{\pdv[\rho]{x}}^2_{2,2N+3} + C \chi \delta^2 h^{2k} \trinorm{ \rho  }_{2,k+1}^2   \nonumber \\
+ \frac{C v_f^2}{\delta^2} \pare{\frac{1}{\chi} +1 } h^{2k+2} \trinorm{\rho}_{2,k+1}^2 +  \frac{C v_f^2}{\rho_m^2} \pare{ 1+ \frac{1}{\chi\delta^2} + \frac{1}{\delta^2}} h^{2k+2} \trinorm{\rho}_{2,k+1}^2  \nonumber \\
+ C\pare{ \frac{ v_f}{\rho_m} + { 1} } \delt \sum_{n=2}^{M} \norm{I[\phi_h^n]}^2 +C \delt^4 \int_{t^{0}}^{t^{M}} \norm{\rho_{ttt}}^2 \; dt \nonumber \\
+ C \pare{v_f^2 + \frac{v_f}{\rho_m}} \delt^4 \int_{t^{0}}^{t^M} \norm{\pdv{x} \rho_{tt} }^2  dt  
+ \frac{C v_f}{\rho_m}  \delt ^4 \int_{t^{0}}^{t^M} \norm{\rho_{tt}}^2 \; dt \label{eq: TF_error5}.
\eeas

Thus, applying the Gronwall inequality from Lemma \ref{lem:discreteGronwall} with a sufficiently small time step ($ \gamma_n\delt :=  C(\frac{v_f}{\rho_m}+ 1)\delt\leq 1$), we obtain
\beas
4\E[\phi^M]  + 4\delt \sum_{n=2}^{M} \Z[\phi^n] + \chi \delta^2 \delt\sum_{n=2}^M \norm{ \pdv{x} I[\phi_h^{n}]^* }^2 \leq \nonumber \\
 + CK \bigg( \E[\phi^1]  + \chi(\delta^2 h^{2k} + h^{2k+2} ) \trinorm{ \rho }_{2,k+1}^2  +\chi \delta^{4N+6} \trinorm{\pdv[\rho]{x}}^2_{2,2N+3}   \nonumber \\
+  \chi \delta^2 h^{2k} \trinorm{ \rho  }_{2,k+1}^2  + \frac{ v_f^2}{\delta^2} \pare{\frac{1}{\chi} +1 } h^{2k+2} \trinorm{\rho}_{2,k+1}^2  \nonumber \\
+  \frac{ v_f^2}{\rho_m^2} \pare{ 1+ \frac{1}{\chi\delta^2} + \frac{1}{\delta^2}} h^{2k+2} \trinorm{\rho}_{2,k+1}^2   + \delt^4 \int_{t^{0}}^{t^{M}} \norm{\rho_{ttt}}^2 \; dt \nonumber \\
+  \pare{v_f^2 + \frac{v_f}{\rho_m}} \delt^4 \int_{t^{0}}^{t^M} \norm{\pdv{x} \rho_{tt} }^2  dt  
+ \frac{ v_f}{\rho_m}  \delt ^4 \int_{t^{0}}^{t^M} \norm{\rho_{tt}}^2 \; dt \bigg) \label{eq: TF_error6},
\eeas
where $K = \exp\pare{ \delt \sum_{n=0}^{M} \frac{ \gamma_n }{1-\delt \gamma_n } }$. Note that $\E[\phi^1]$ is bounded as in \cite{decaria2020time}, i.e.,
\beas
\E[\phi^1] \leq C \pare{ \norm{ \rho^1 - \rho_h^1}^2 + \norm{\rho^0 - \rho_h^0}^2 }+ C h^{2k+2} \trinorm{\rho}_{\infty,k+1}^{2},
\eeas
where the error in the first time step is bounded by employing Theorem \ref{thm:ErrorEstimate} with $M=1$.
Applying triangle inequality and substituting \ref{eq: epsDef} and \ref{eq: zetadef} completes the proof.
\end{proof}

\begin{remark}\label{remark: convergence_rate_2}
For a smooth solution $\rho$, after dropping higher order terms and again picking $\delta=h^{0.5}$, the error estimate transforms into
\begin{equation*}
 \trinorm{\rho - \rho_h}^2_{\infty,0} = O\pare{\Delta t^4 + \chi h^{2N+3} +(\chi+\chi^{-1}+1) h^{2k+1}}.
 \end{equation*}
\end{remark}

\section{Computational Experiments}
\label{sec: numerical}
    \subsection{Convergence Rate Simulations}
        We now turn our attention to verifying the results of Theorem \ref{thm:ErrorEstimate} on the 1D domain $[0,1]$. We employ the manufactured solution $\rho(x,t)=\sin^4(\pi x)\sin(t)$ to the Dirichlet problem
            \[
                \begin{cases}
                    \frac{\partial\rho}{\partial t}+\left(v_f-\frac{2v_f}{\rho_m}\rho\right)\frac{\partial\rho}{\partial x}=f,&x\in(0,1),t>0,\\
                    \rho(0,t)=\rho(1,t)=0,&x\in\{0,1\},t>0,\\
                    \rho(x,0)=0,&x\in[0,1],t=0,
                \end{cases}
            \]
        where $v_f=\rho_m=1$, and
            \begin{align*}
                f(x,t)=&\sin^4(\pi x)\cos(t)+4\pi\cos(\pi x)\sin^3(\pi x)\sin(t)\left(v_f-\frac{2v_f}{\rho_m}\sin^4(\pi x)\sin(t)\right).
            \end{align*}
        Based on Remark \ref{remark: convergence_rate}, we observe the spatial rate of convergence, in the $P2$ finite element space with order of deconvolution $N=1$, to be $\mathcal{O}(h^{2.5})$ with stabilization and $\mathcal{O}(h^{2})$ without stabilization. Tables \ref{table:space_rates_chi=0}-\ref{table:space_rates_chi=1} exhibit these rates for a simulation where final time $T=0.02,\,\delt=5\times10^{-6}$, and $\delta=0.1\sqrt{h}$.
            \begin{table}[ht]
                \renewcommand{\arraystretch}{1.3}
                \hspace*{-0.7cm}
                \parbox{.45\linewidth}{
                    \tabcolsep=.1cm
                    \tiny
                    \begin{tabular}{ccccc}
                        \toprule
                        $\mathbf{h}$ & \multicolumn{2}{c}{\bfseries No time filter ($\mathbf{\gamma=0}$)} & \multicolumn{2}{c}{\bfseries Time filter ($\mathbf{\gamma=\frac{2}{3}}$)} \\
                        \cmidrule(r){2-3}\cmidrule(l){4-5}
                                        & $\norm{\rho-\rho_h}_{\ell^\infty(0,T;L^2)}$ & Rate & $\norm{\rho-\rho_h}_{\ell^\infty(0,T;L^2)}$  &  Rate\\
                        \midrule
                               $\frac{1}{6}$   & 9.58$\times10^{-5}$ & -- & 9.28$\times10^{-5}$ &  -- \\
                               $\frac{1}{12}$  & 1.46$\times10^{-5}$ & 2.71 & 1.34$\times10^{-5}$ & 2.79    \\
                               $\frac{1}{24}$  & 2.67$\times10^{-6}$ & 2.45 & 2.68$\times10^{-6}$ & 2.32 \\
                               $\frac{1}{48}$  & 6.23$\times10^{-7}$ & 2.10 & 6.26$\times10^{-7}$ & 2.10  \\
                               $\frac{1}{96}$  & 1.55$\times10^{-7}$ & 2.01 & 1.53$\times10^{-7}$ & 2.03  \\
                               $\frac{1}{192}$ & 3.84$\times10^{-8}$ & 2.01 & 3.82$\times10^{-8}$ & 2.00 \\
                        \bottomrule
                    \end{tabular}\caption{Space rates for $\chi=0$}\label{table:space_rates_chi=0}
                }
                \hspace*{1.1cm}
                \parbox{.45\linewidth}{
                    \tabcolsep=.1cm
                    \tiny
                    \begin{tabular}{ccccc}
                        \toprule
                        $\mathbf{h}$ & \multicolumn{2}{c}{\bfseries No time filter ($\mathbf{\gamma=0}$)} & \multicolumn{2}{c}{\bfseries Time filter ($\mathbf{\gamma=\frac{2}{3}}$)} \\
                        \cmidrule(r){2-3}\cmidrule(l){4-5}
                                        & $\norm{\rho-\rho_h}_{\ell^\infty(0,T;L^2)}$ & Rate & $\norm{\rho-\rho_h}_{\ell^\infty(0,T;L^2)}$  & Rate\\
                        \midrule
                               $\frac{1}{6}$   & 8.78$\times10^{-5}$ &  --      & 8.42$\times10^{-5}$ &  -- \\
                               $\frac{1}{12}$  & 1.35$\times10^{-5}$ & 2.70 & 1.49$\times10^{-5}$ & 2.50 \\
                               $\frac{1}{24}$  & 2.54$\times10^{-6}$ & 2.41 & 2.65$\times10^{-6}$ & 2.49 \\
                               $\frac{1}{48}$  & 5.51$\times10^{-7}$ & 2.20 & 5.46$\times10^{-7}$ & 2.28 \\
                               $\frac{1}{96}$  & 1.12$\times10^{-7}$ & 2.29 & 1.12$\times10^{-7}$ & 2.28 \\
                               $\frac{1}{192}$ & 2.10$\times10^{-8}$ & 2.42 & 2.10$\times10^{-8}$ & 2.42 \\
                        \bottomrule
                    \end{tabular}
                    \caption{Space rates for $\chi=1$}
                    \label{table:space_rates_chi=1}
                }
            \end{table}
        
        For the time convergence rate simulations, the finite element space is $P2$ and we set order of deconvolution $N=1$, mesh size $h=\frac{1}{100}$, filter width $\delta=0.1\sqrt{h}$, and final time $T=1$.
        
        Tables \ref{table:time_rates_chi=0}-\ref{table:time_rates_chi=1} display the expected time convergence rates without and with stabilization (i.e. $\chi=0$ and $\chi=1$ respectively), where the rate is 1 without time filtering, and 2 with time filtering. We observe that the stabilization makes little difference in both the errors and the rates.

        \begin{table}[ht]
            \renewcommand{\arraystretch}{1.3}
            \hspace*{-0.7cm}
            \parbox{.45\linewidth}{
                \tabcolsep=.1cm
                \tiny
                \begin{tabular}{ccccc}
                    \toprule
                    $\mathbf{\Delta t}$ & \multicolumn{2}{c}{\bfseries No time filter ($\mathbf{\gamma=0}$)} & \multicolumn{2}{c}{\bfseries Time filter ($\mathbf{\gamma=\frac{2}{3}}$)} \\
                    \cmidrule(r){2-3}\cmidrule(l){4-5}
                                    & $\norm{\rho-\rho_h}_{\ell^\infty(0,T;L^2)}$ & Rate & $\norm{\rho-\rho_h}_{\ell^\infty(0,T;L^2)}$  &  Rate\\
                    \midrule
                           $\frac{1}{10}$  & $1.97\times10^{-2}$ & --      & $4.88\times10^{-3}$ & --\\
                           $\frac{1}{20}$  & $9.13\times10^{-3}$ & 1.11 & $1.26\times10^{-3}$ & 1.95  \\
                           $\frac{1}{40}$  & $4.43\times10^{-3}$ & 1.04 & $3.29\times10^{-4}$ & 1.94  \\
                           $\frac{1}{80}$  & $2.19\times10^{-3}$ & 1.02 & $8.48\times10^{-5}$ & 1.96  \\
                           $\frac{1}{160}$  & $1.09\times10^{-3}$ & 1.01 & $2.31\times10^{-5}$ & 1.88  \\
                    \bottomrule
                \end{tabular}\caption{Time rates for $\chi=0$}\label{table:time_rates_chi=0}
            }
            \hspace*{1.1cm}
            \parbox{.45\linewidth}{
                \tabcolsep=.1cm
                \tiny
                \begin{tabular}{ccccc}
                    \toprule
                    $\mathbf{\Delta t}$ & \multicolumn{2}{c}{\bfseries No time filter ($\mathbf{\gamma=0}$)} & \multicolumn{2}{c}{\bfseries Time filter ($\mathbf{\gamma=\frac{2}{3}}$)} \\
                    \cmidrule(r){2-3}\cmidrule(l){4-5}
                                    & $\norm{\rho-\rho_h}_{\ell^\infty(0,T;L^2)}$ & Rate & $\norm{\rho-\rho_h}_{\ell^\infty(0,T;L^2)}$  & Rate\\
                    \midrule
                           $\frac{1}{10}$  & $1.96\times10^{-2}$  & --      & $4.87\times10^{-3}$ & --\\
                           $\frac{1}{20}$  & $9.12\times10^{-3}$  & 1.10 & $1.26\times10^{-3}$ & 1.95  \\
                           $\frac{1}{40}$  & $4.43\times10^{-3}$ & 1.04 & $3.29\times10^{-4}$ & 1.94  \\
                           $\frac{1}{80}$  & $2.19\times10^{-3}$ & 1.02 & $8.43\times10^{-5}$  & 1.96  \\
                           $\frac{1}{160}$  & $1.09\times10^{-3}$ & 1.01 & $2.15\times10^{-5}$  & 1.97  \\
                    \bottomrule
                \end{tabular}
                \caption{Time rates for $\chi=1$}
                \label{table:time_rates_chi=1}
            }
        \end{table}

    \subsection{Rarefaction}
    
        Here we examine the LWR model with the Greenshield's flow velocity relationship, with parameters $\rho_m=1,v_f=1,$ on the domain [0,1]. The initial and boundary conditions are $\rho(x, 0) = 0,$ for all $x \in [0,1]$ and  $\rho(0, t) = 0.47$ for $t>0$.  The solution obtained from this configuration is a rarefaction wave, expressed as
            \begin{align*}
                \rho(x, t) = 
                \begin{cases} 
                    0.47, & \text{for } x \leq 0.06t, \\
                    \frac{1}{2} - \frac{x}{2t}, & \text{for } 0.06t < x < t, \\
                    0, & \text{for } x \geq t.
                \end{cases} 
            \end{align*}
        
        In the context of the motivating biological application, this example models the case where the DNA strand is initially empty since the initial density is zero.  The boundary condition indicates that the initiation process begins immediately with RNAPs initiating on the strand at a constant rate so that the density at $x=0$ is fixed at $\rho(0, t) = 0.47$.   
        
        We use $P1$ elements for our Algorithm \ref{alg: FDTFGM} without time filtering. We also set $h=\frac{1}{128},\Delta t=10^{-4},$ and $\delta=\sqrt{h}$. Figures \ref{fig:RarefactionHalfTime} and \ref{fig:RarefactionFullTime} show simulations at times $t=0.5$ and $t=1$ based on different values of the stabilization parameter $\chi$.
        
            \begin{figure}[h!]
                \begin{subfigure}{.5\textwidth}
                    \centering
                    \includegraphics[clip,scale=0.08]{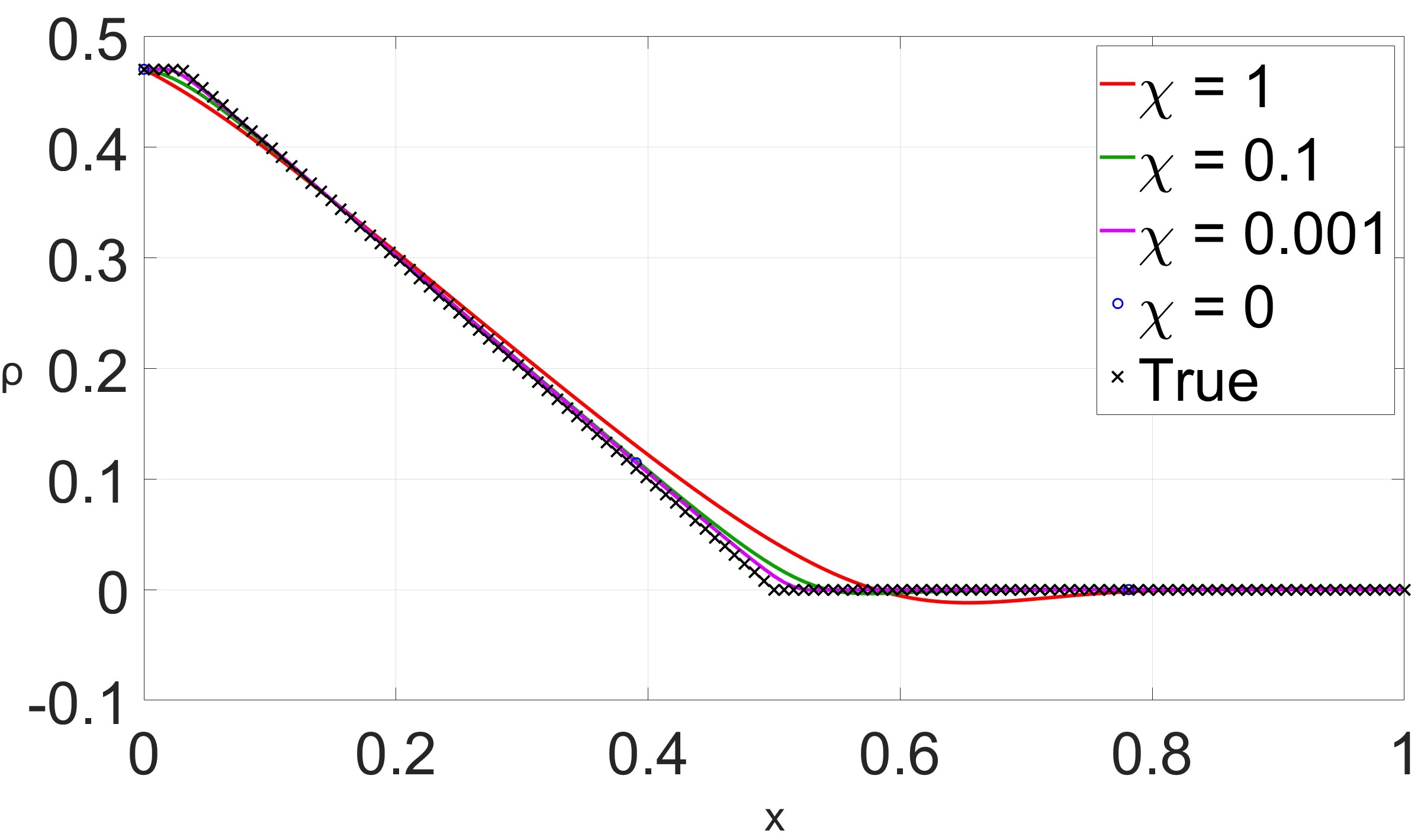}
                    \caption{$t=0.5$ with $N=0$}
                    \label{fig:RarefactionHalfTime}
                \end{subfigure}
                \begin{subfigure}{.5\textwidth}
                    \centering
                    \includegraphics[clip,scale=0.08]{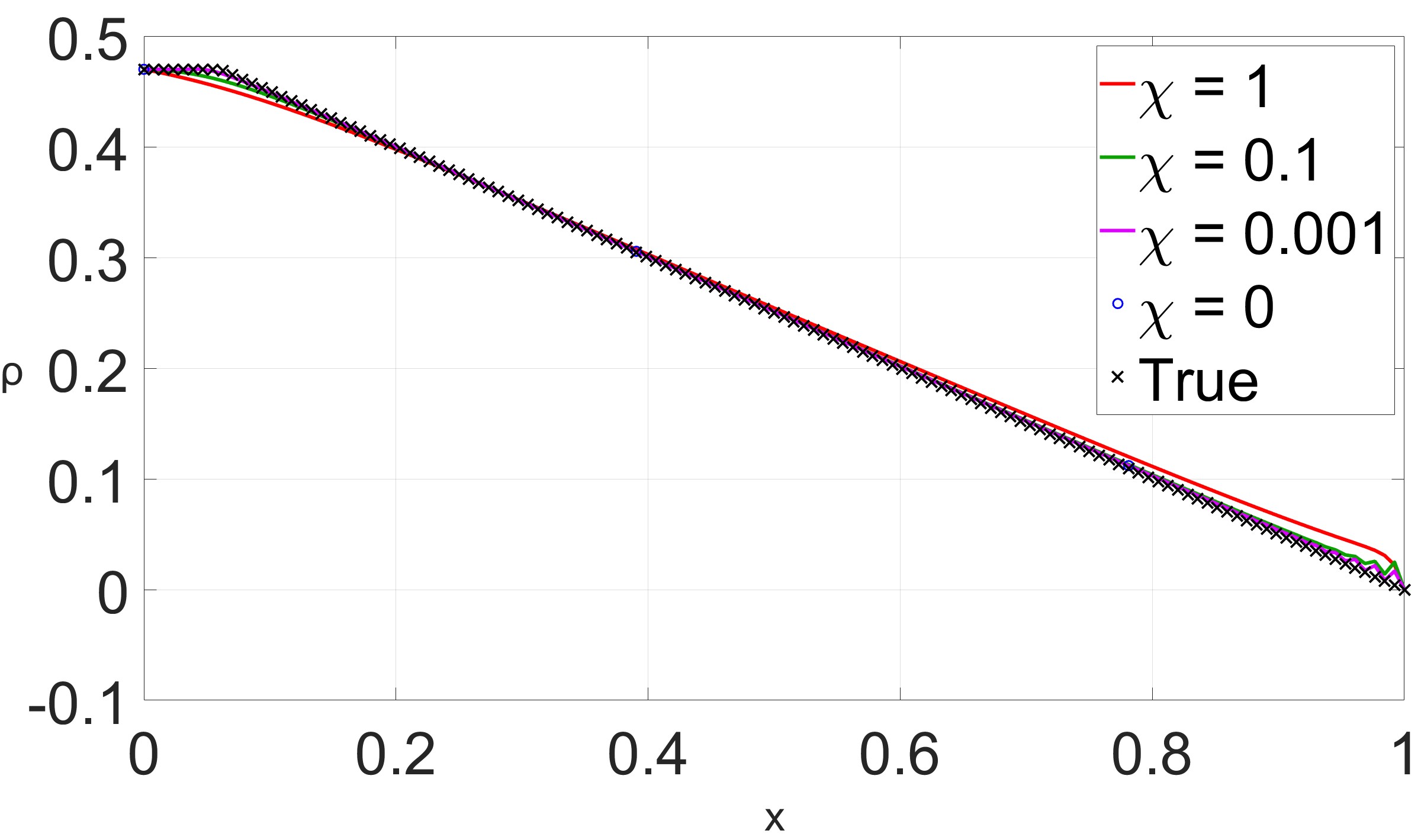}
                    \caption{$t=1$ with $N=0$}
                    \label{fig:RarefactionFullTime}
                \end{subfigure}
                \caption{Rarefaction density profile for various $\chi$}
            \end{figure}
        As can be somewhat seen in Figures \ref{fig:RarefactionHalfTime} and \ref{fig:RarefactionFullTime}, the finite element density profile begins to diverge from the true density profile as $\chi$ increases.

        Figures \ref{fig:L2ErrorsRarefactionN0P1}-\ref{fig:L2ErrorsRarefactionN1P2} examine the $L^2$ error further by plotting $\norm{\rho_h(\cdot,t)-\rho(\cdot,t)}$ over time for order of deconvolution $N=0$ and $N=1$, combined with $P1$ and $P2$ elements respectively. We observe that the errors for $N=1$ are smaller than the errors for $N=0$. Moreover, very small values of $\chi$ for stabilization or \textit{no} stabilization yield the best accuracy for the rarefaction solution. This is to be expected, as the rarefaction solution is continuous. In the next section, we will see an example of a discontinuous solution that enjoys the effects of stabilization. 
    
        \begin{figure}[h!]
            \begin{subfigure}{.5\textwidth}
                \hspace*{-.5cm}
                \includegraphics[clip,scale=0.08]{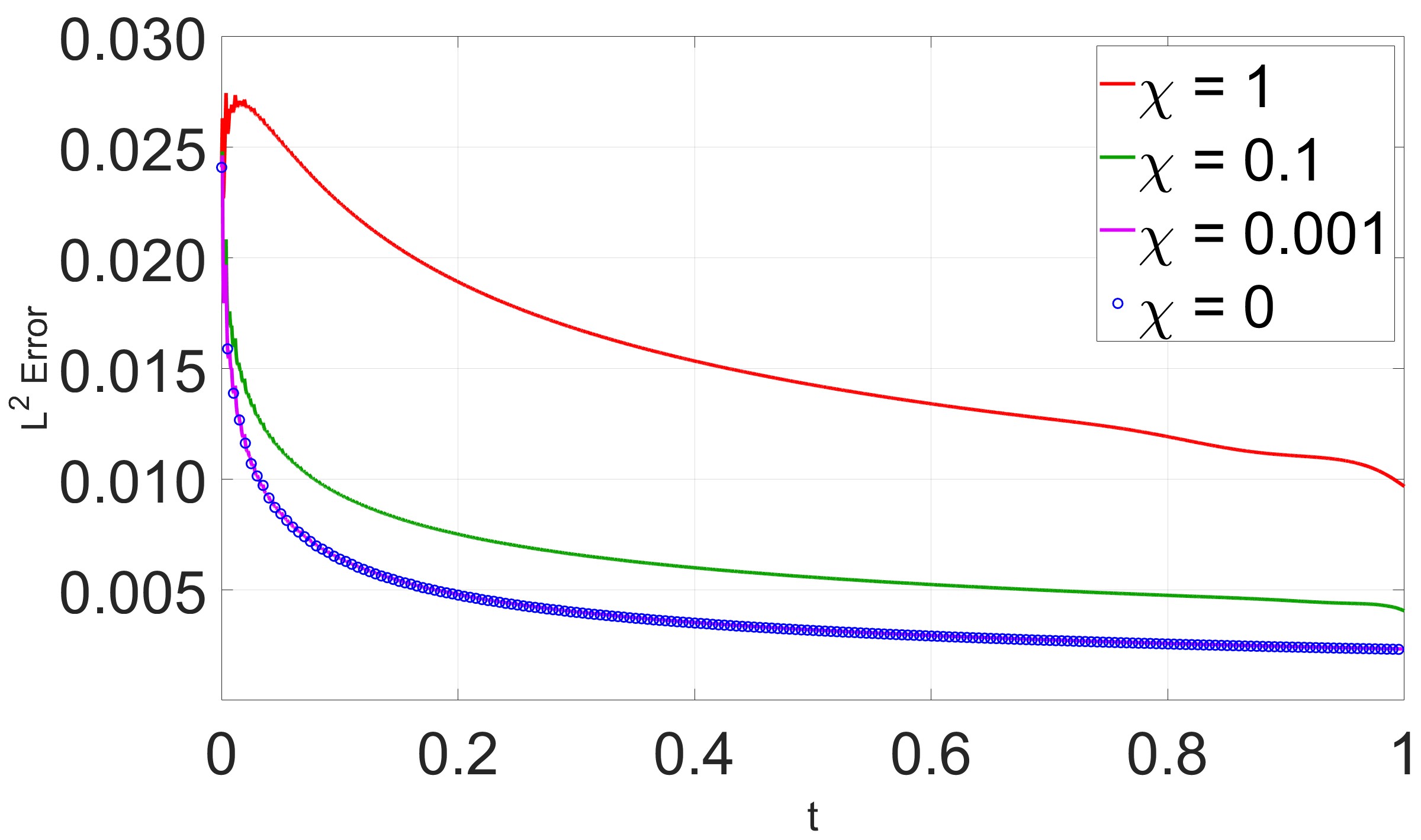}
                \caption{$N=0$ and $P1$ elements}
                \label{fig:L2ErrorsRarefactionN0P1}
            \end{subfigure}
            \begin{subfigure}{.5\textwidth}
                \hspace*{-.5cm}
                \includegraphics[clip,scale=0.08]{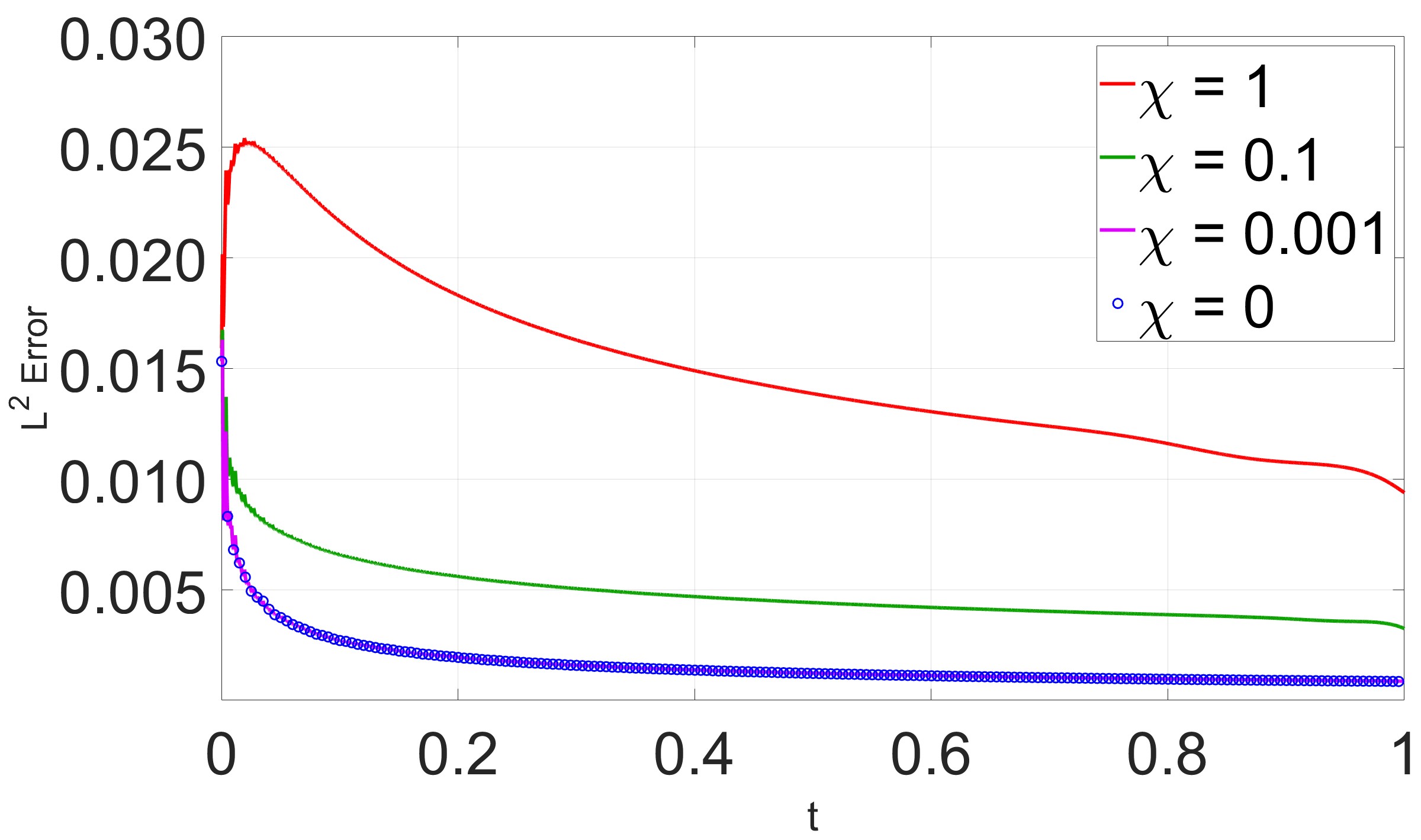}
                \caption{$N=0$ and $P2$ elements}
                \label{fig:L2ErrorsRarefactionN0P2}
            \end{subfigure}
            \begin{subfigure}{.5\textwidth}
                \hspace*{-.5cm}
                \includegraphics[clip,scale=0.08]{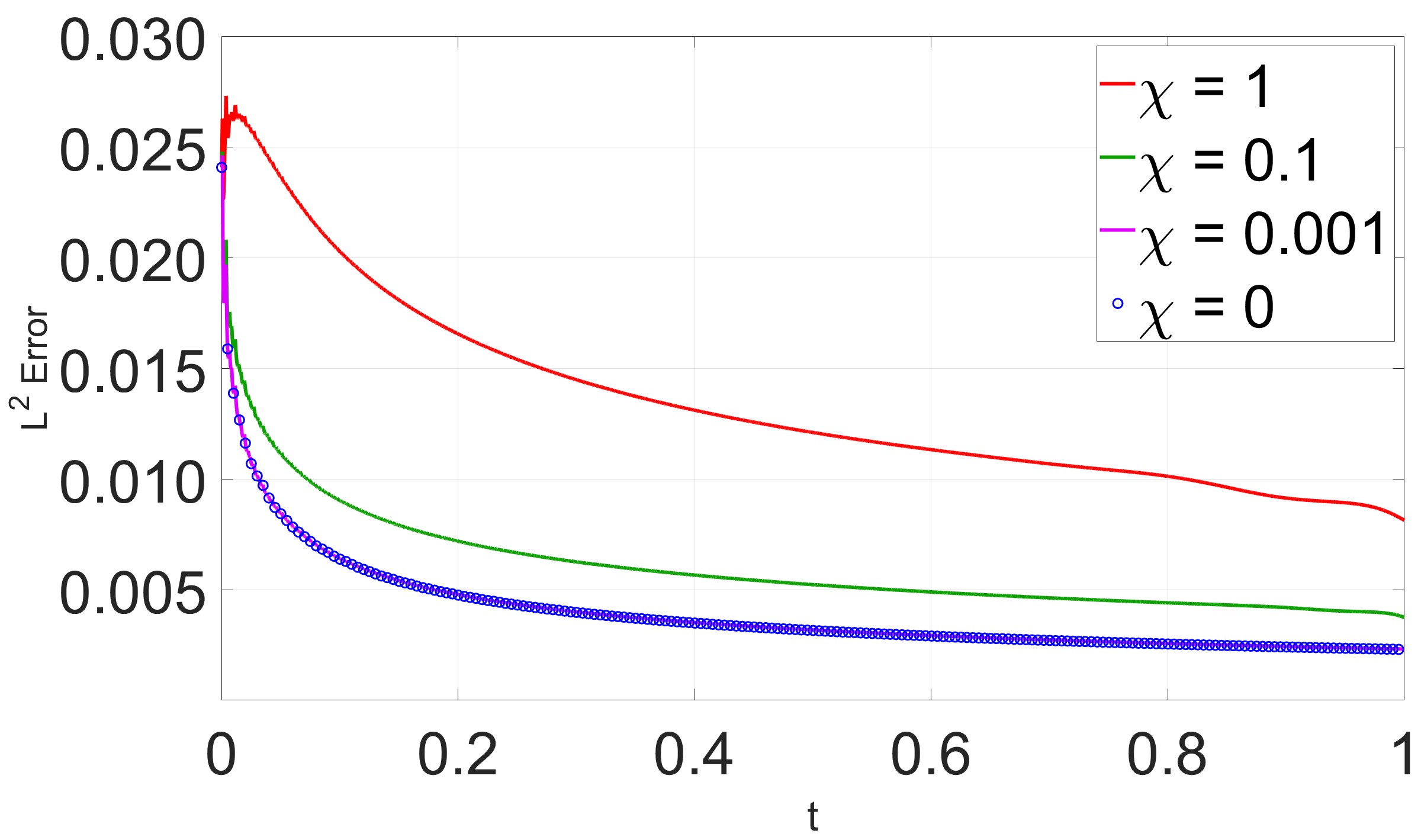}
                \caption{$N=1$ and $P1$ elements}
                \label{fig:L2ErrorsRarefactionN1P1}
            \end{subfigure}
            \begin{subfigure}{.5\textwidth}
                \hspace*{-.5cm}
                \includegraphics[clip,scale=0.08]{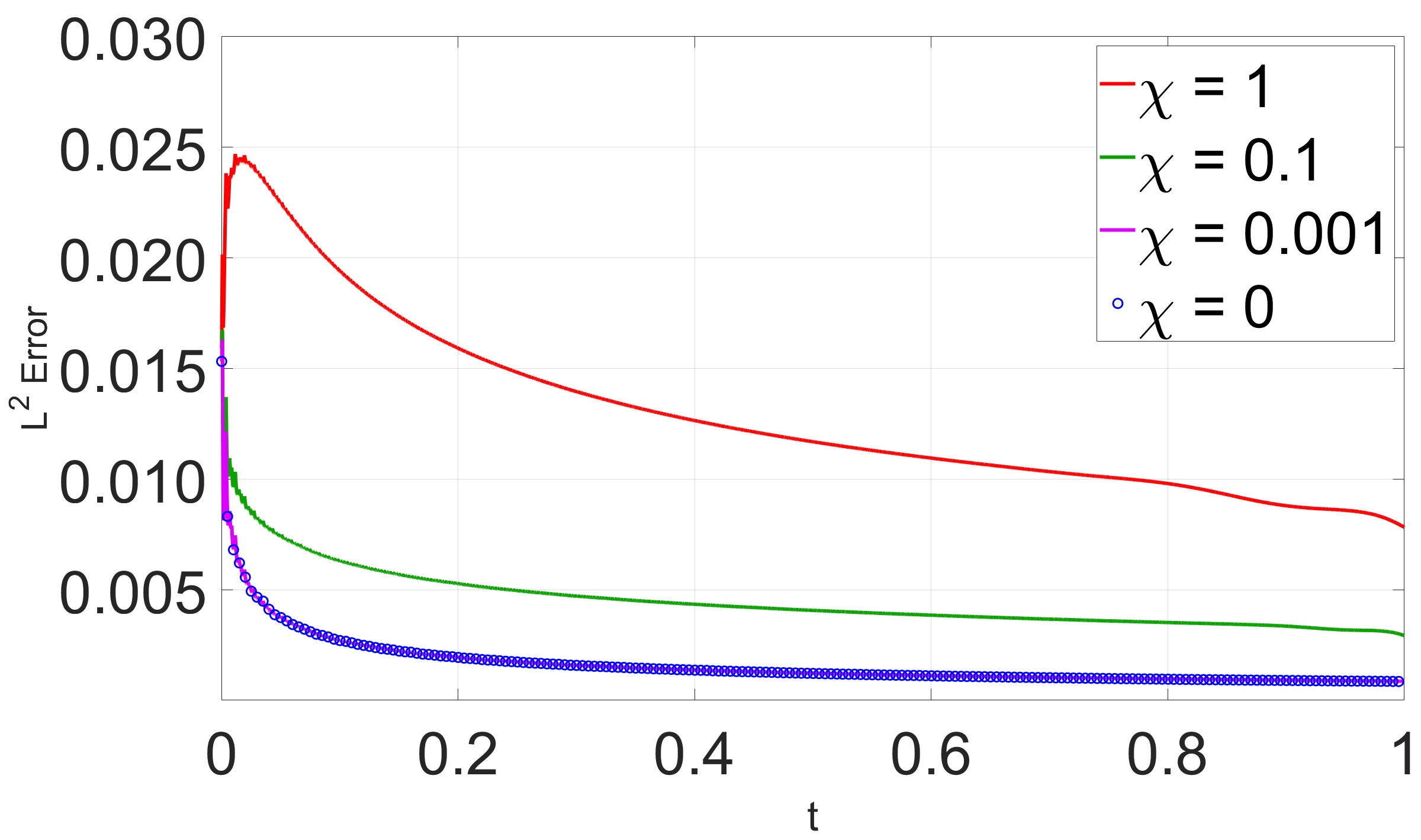}
                \caption{$N=1$ and $P2$ elements}
                \label{fig:L2ErrorsRarefactionN1P2}
            \end{subfigure}
            \caption{Plots of error in $\norm{\cdot}_{2}$ over time}
        \end{figure}

    \subsection{Shock Wave}
        Herein, we explore a shock wave solution to 
        \eqref{eq: problem_description} on the domain $[0,1]$. The parameters $\rho_m=1$ and $v_f=1$. We impose boundary and initial conditions $\rho(x, 0) = \frac{1}{3} \text{ on } (0,1], \; \rho(0, t) = \frac{1}{4}$. The shock wave is
            \begin{align*}
                \rho(x, t) = 
                \begin{cases} 
                    \frac{1}{4} & \text{if } x \leq \frac{5}{12}t, \\
                    \frac{1}{3} & \text{if } x > \frac{5}{12}t.
                \end{cases} 
            \end{align*}

        In the context of the biological application, this example corresponds to the case where the strand is initially occupied by RNAPs so that, on average, one third of it is being transcribed.  As time moves forward, the initiation rate is decreased so that the density at the initiation site is maintained at $\rho(0,t)= \frac{1}{4}$.  The result is a travelling shock wave with a discontinuity that propagates across the domain with shock speed of $\frac{5}{12}$.
        
        We use Algorithm \ref{alg: FDTFGM} without time filtering, with order of deconvolution $N=0$ and $N=1$, combined with $P1$ and $P2$ elements. Furthermore, $h=\frac{1}{128},\Delta t=10^{-4},$ and $\delta=\sqrt{h}$. Figures \ref{fig:ShockWaveHalfTimeN0P1}-\ref{fig:ShockWaveFullTimeN1P2} showcase how adjusting the stabilization parameter affects the finite element solution for the shock wave problem. The advantage of Vreman stabilization is evident in each of these figures, as increasing $\chi$ transforms the finite element solution from a highly oscillatory graph to a smooth shock wave. Additionally, we observe that for a fixed degree of finite elements, increasing the order of deconvolution $N$ does not visibly improve the oscillations. On the other hand, for both $N=1$ and $N=2$, increasing the degree of polynomials from $P1$ to $P2$ finite elements reduces the oscillations.
            \begin{figure}[!htb]
                \begin{subfigure}{.5\textwidth}
                    \includegraphics[clip,scale=0.08]{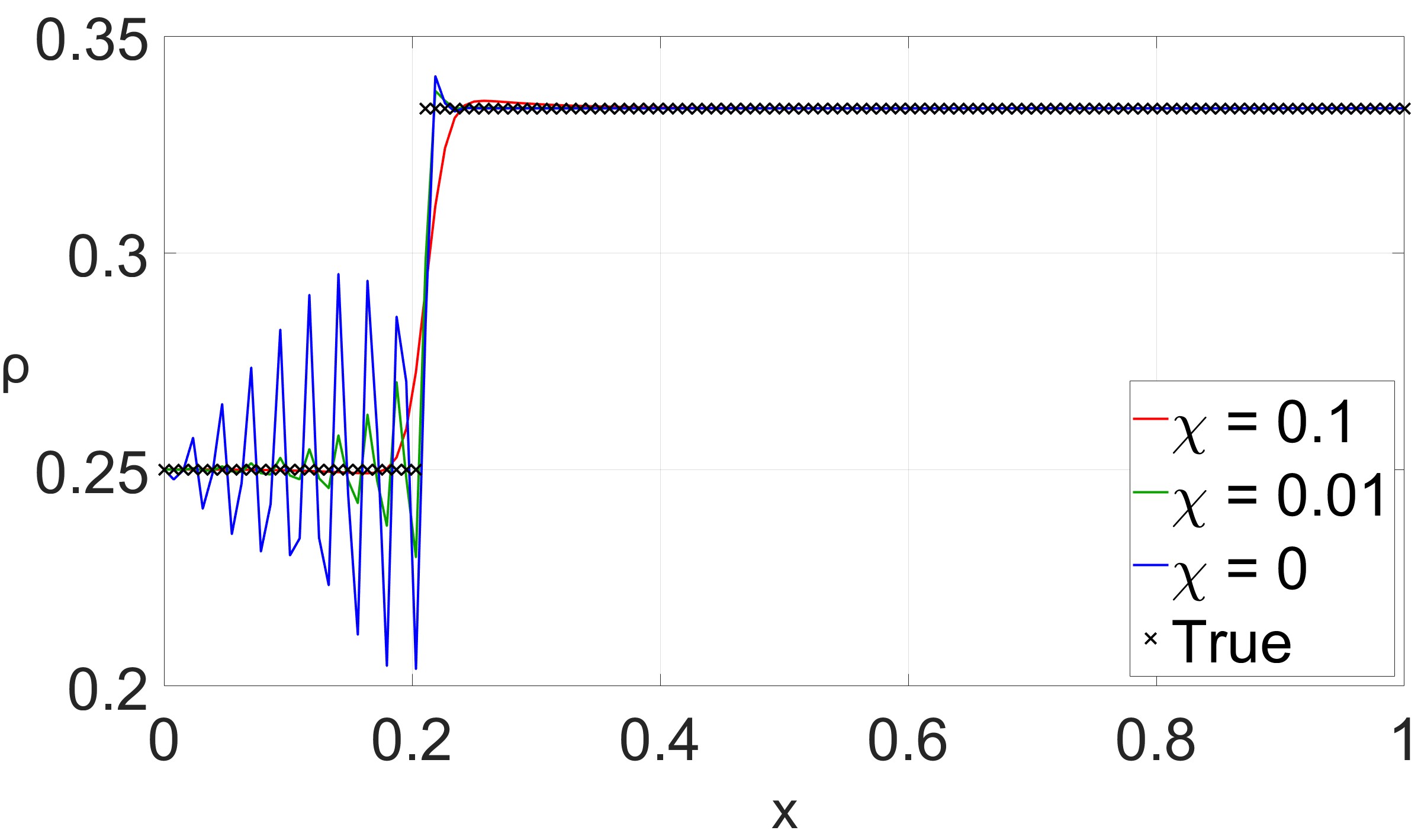}
                    \caption{Time $t=0.5, N=0$ with $P1$ elements}
                    \label{fig:ShockWaveHalfTimeN0P1}
                \end{subfigure}
                \begin{subfigure}{.5\textwidth}
                    \includegraphics[clip,scale=0.08]{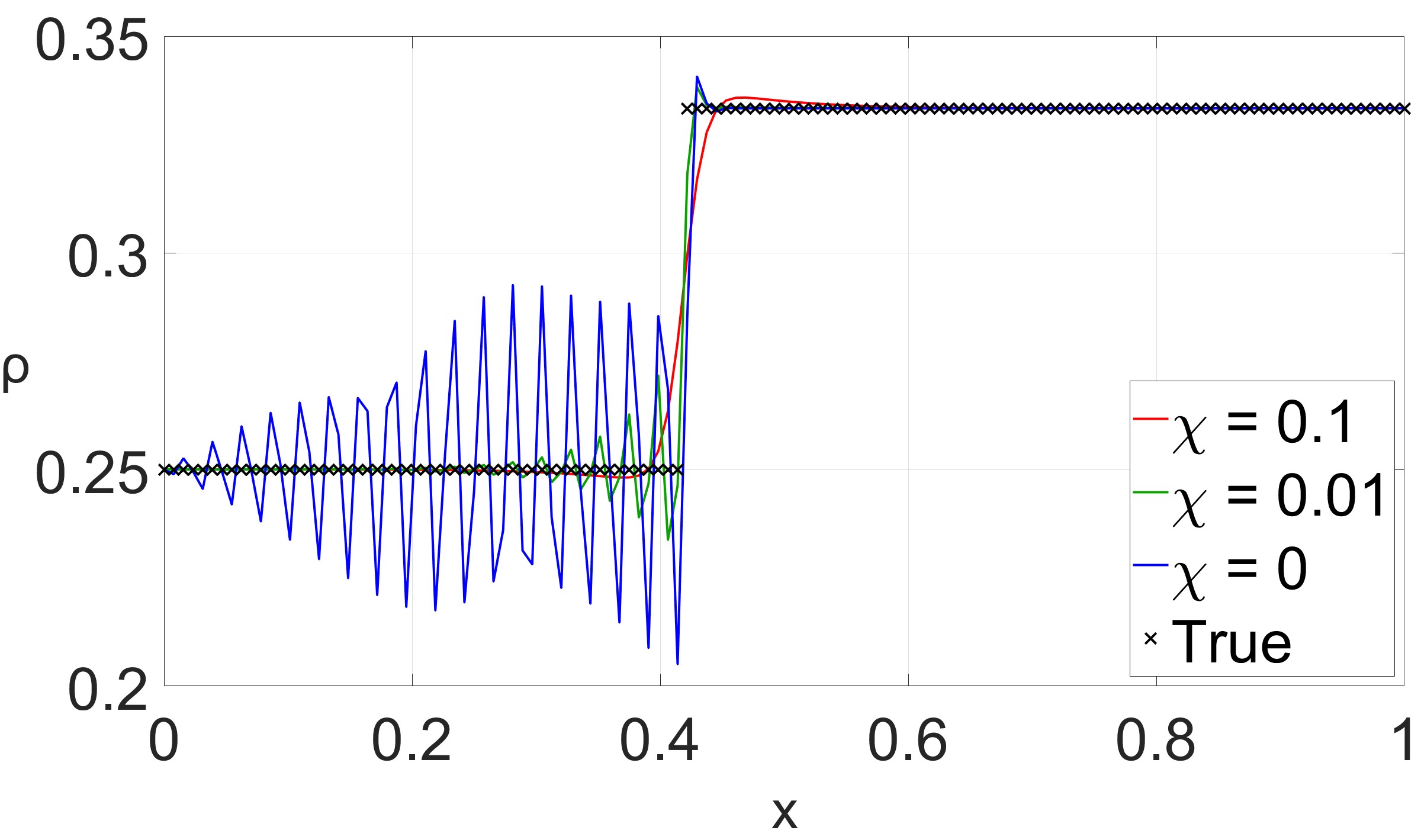}
                    \caption{Time $t=1, N=0$ with $P1$ elements}
                    \label{fig:ShockWaveFullTimeN0P1}
                \end{subfigure}
                \begin{subfigure}{.5\textwidth}
                    \includegraphics[clip,scale=0.08]{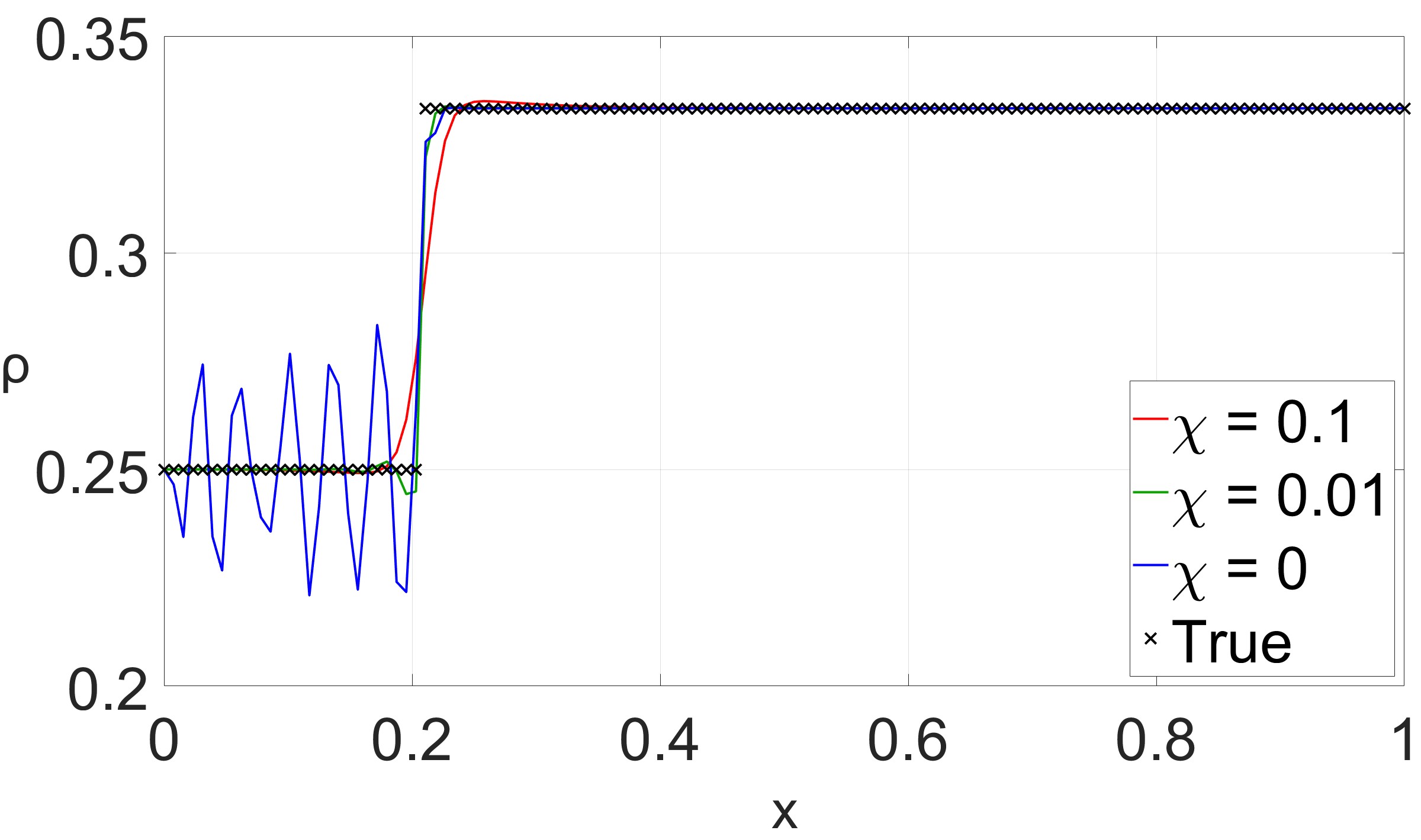}
                    \caption{Time $t=0.5, N=0$ with $P2$ elements}
                    \label{fig:ShockWaveHalfTimeN0P2}
                \end{subfigure}
                \begin{subfigure}{.5\textwidth}
                    \includegraphics[clip,scale=0.08]{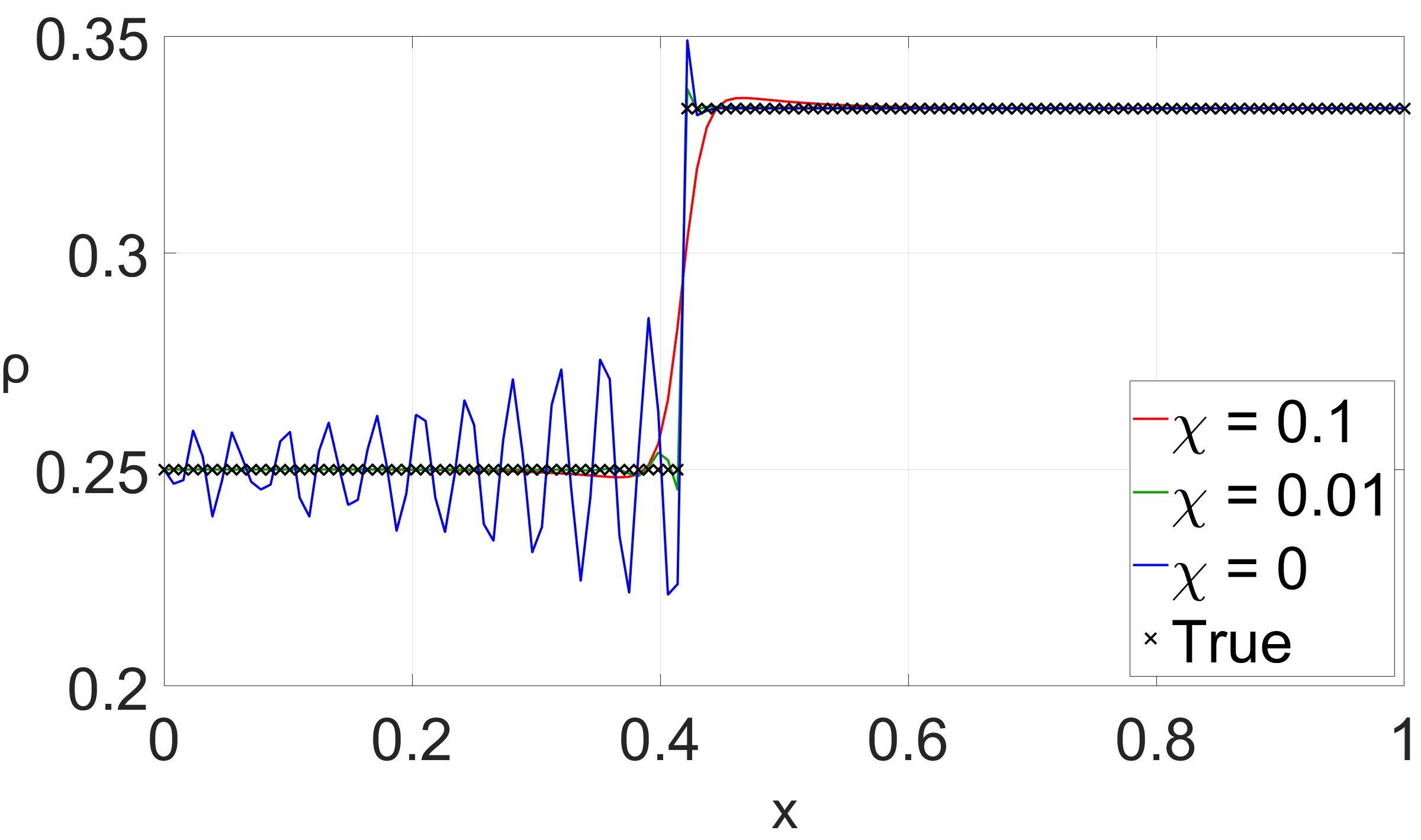}
                    \caption{Time $t=1, N=0$ with $P2$ elements}
                    \label{fig:ShockWaveFullTimeN0P2}
                \end{subfigure}
                \begin{subfigure}{.5\textwidth}
                    \includegraphics[clip,scale=0.08]{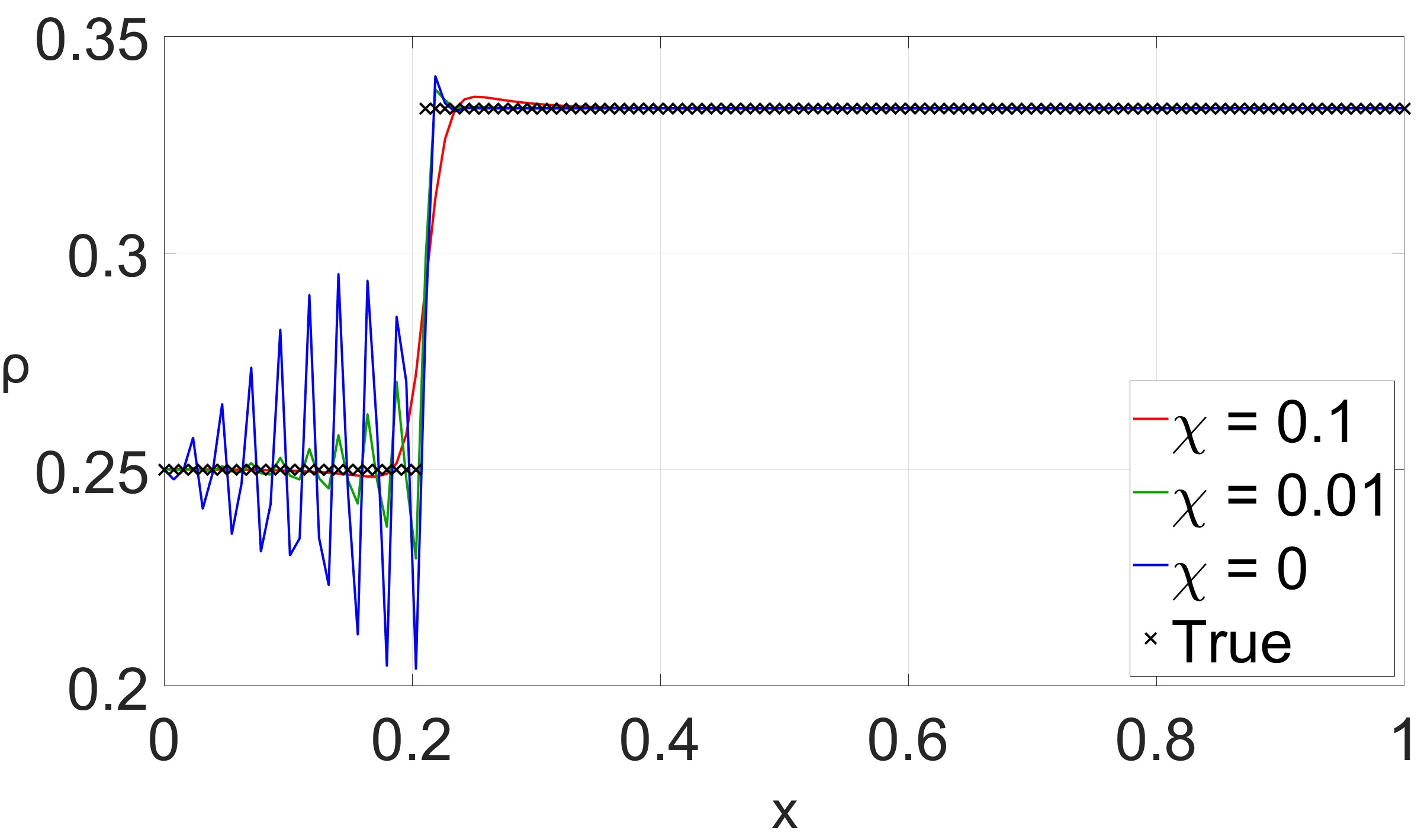}
                    \caption{Time $t=0.5, N=1$ with $P1$ elements}
                    \label{fig:ShockWaveHalfTimeN1P1}
                \end{subfigure}
                \begin{subfigure}{.5\textwidth}
                    \includegraphics[clip,scale=0.08]{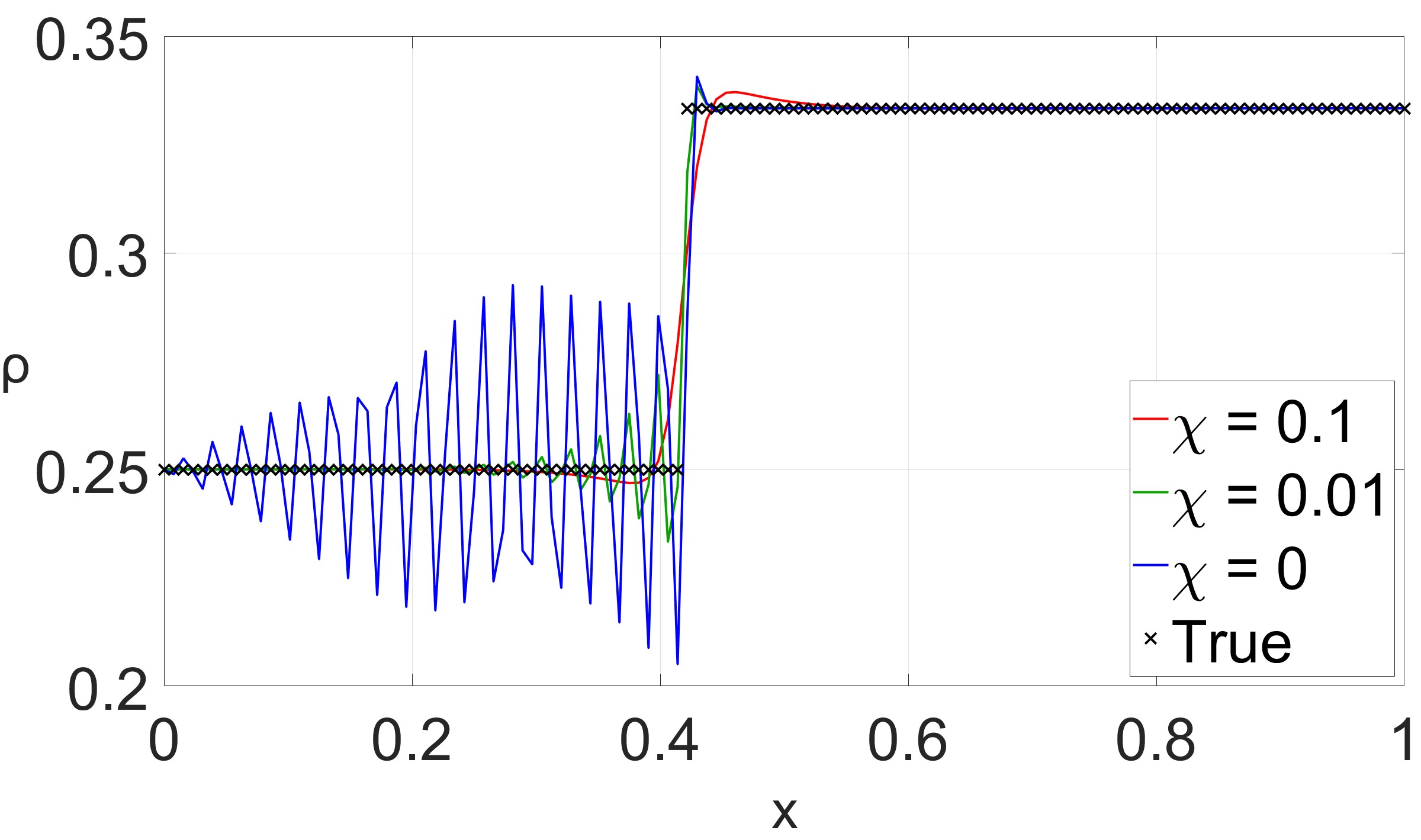}
                    \caption{Time $t=1, N=1$ with $P1$ elements}
                    \label{fig:ShockWaveFullTimeN1P1}
                \end{subfigure}
                \begin{subfigure}{.5\textwidth}
                    \includegraphics[clip,scale=0.08]{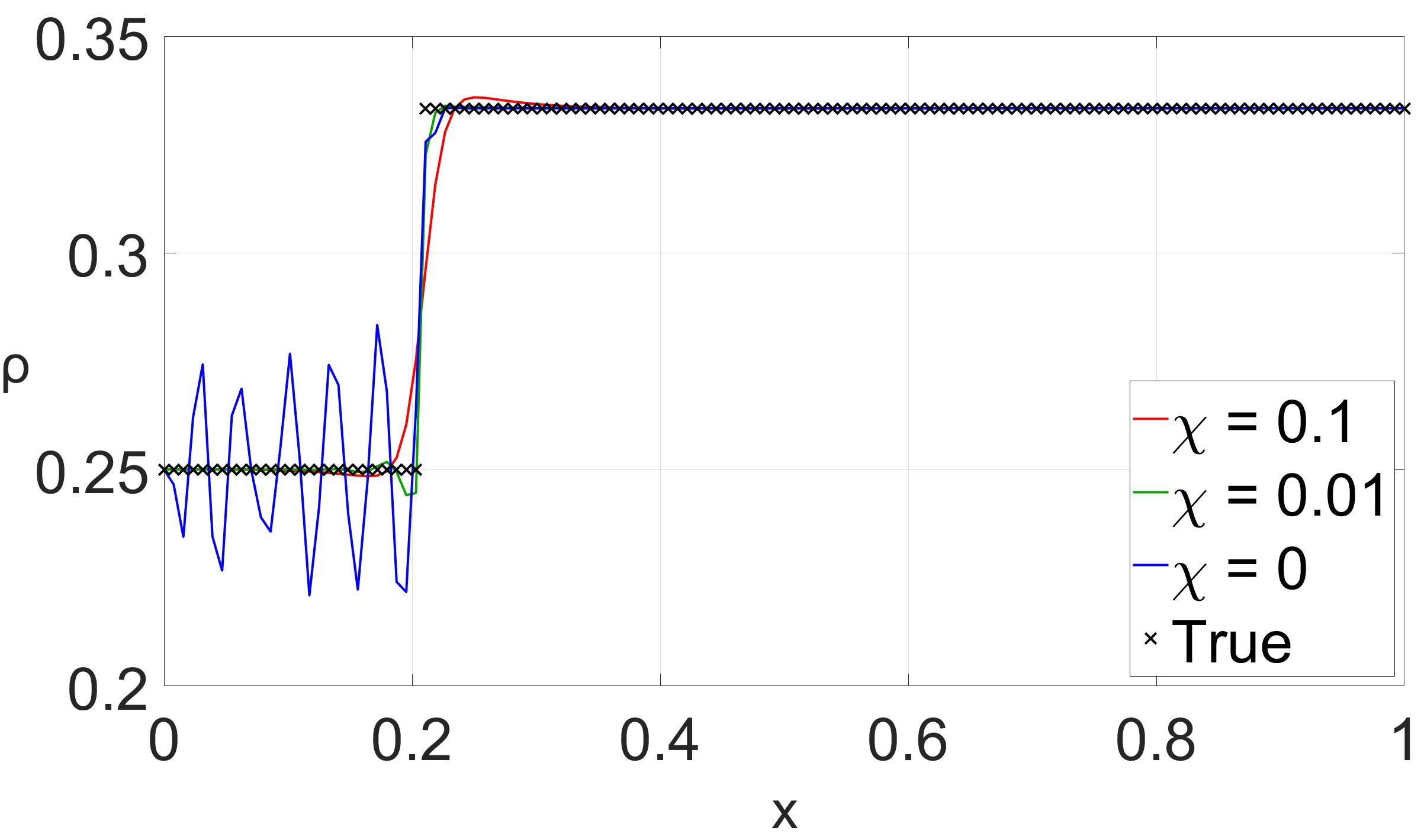}
                    \caption{Time $t=0.5, N=1$ with $P2$ elements}
                    \label{fig:ShockWaveHalfTimeN1P2}
                \end{subfigure}
                \begin{subfigure}{.5\textwidth}
                    \includegraphics[clip,scale=0.08]{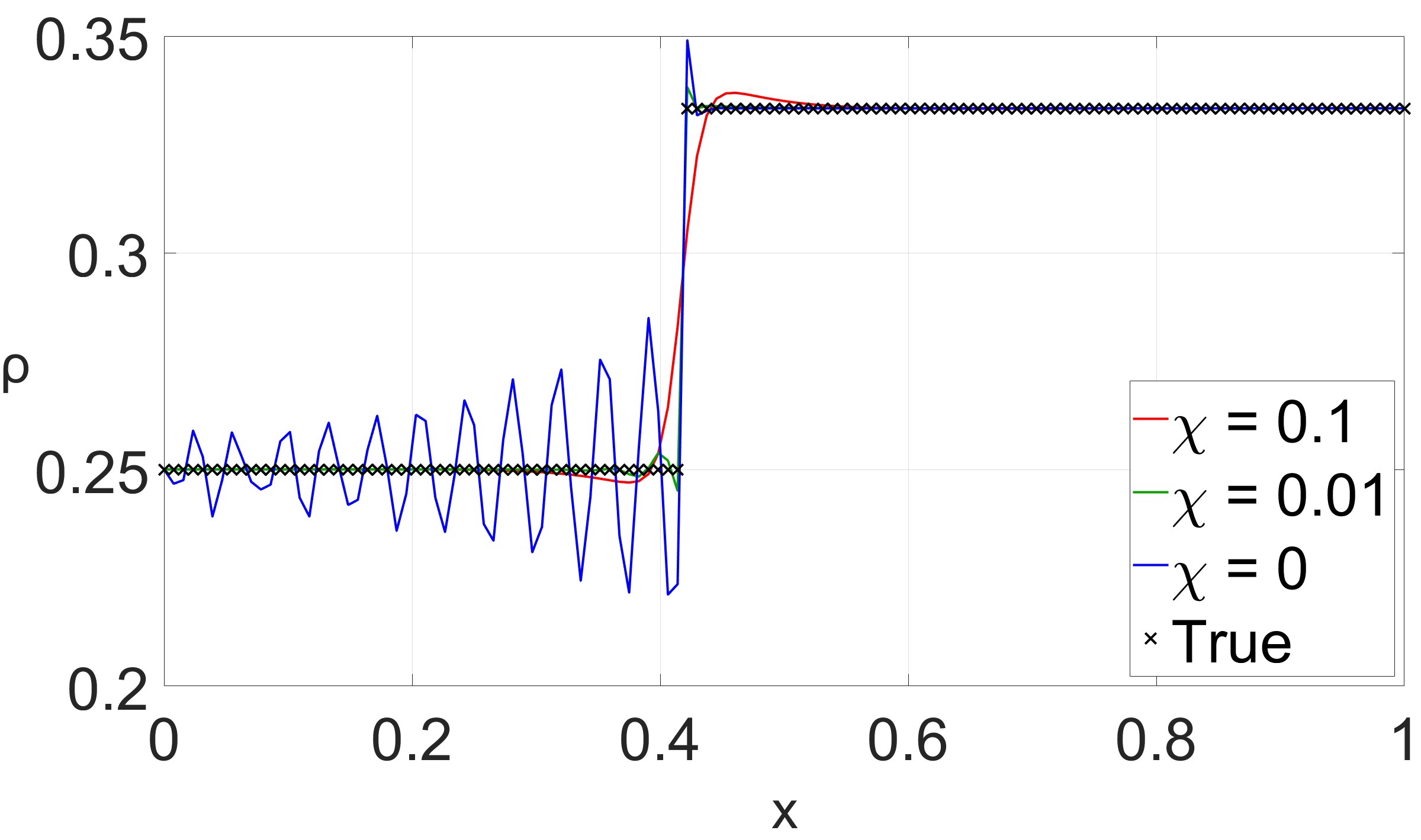}
                    \caption{Time $t=1, N=1$ with $P2$ elements}
                    \label{fig:ShockWaveFullTimeN1P2}
                \end{subfigure}
                \caption{Shock wave density profiles for various values of $\chi$}
            \end{figure}

            \begin{figure}[!htb]
                \begin{subfigure}{.5\textwidth}
                    \centering
                    \includegraphics[clip,scale=0.08]{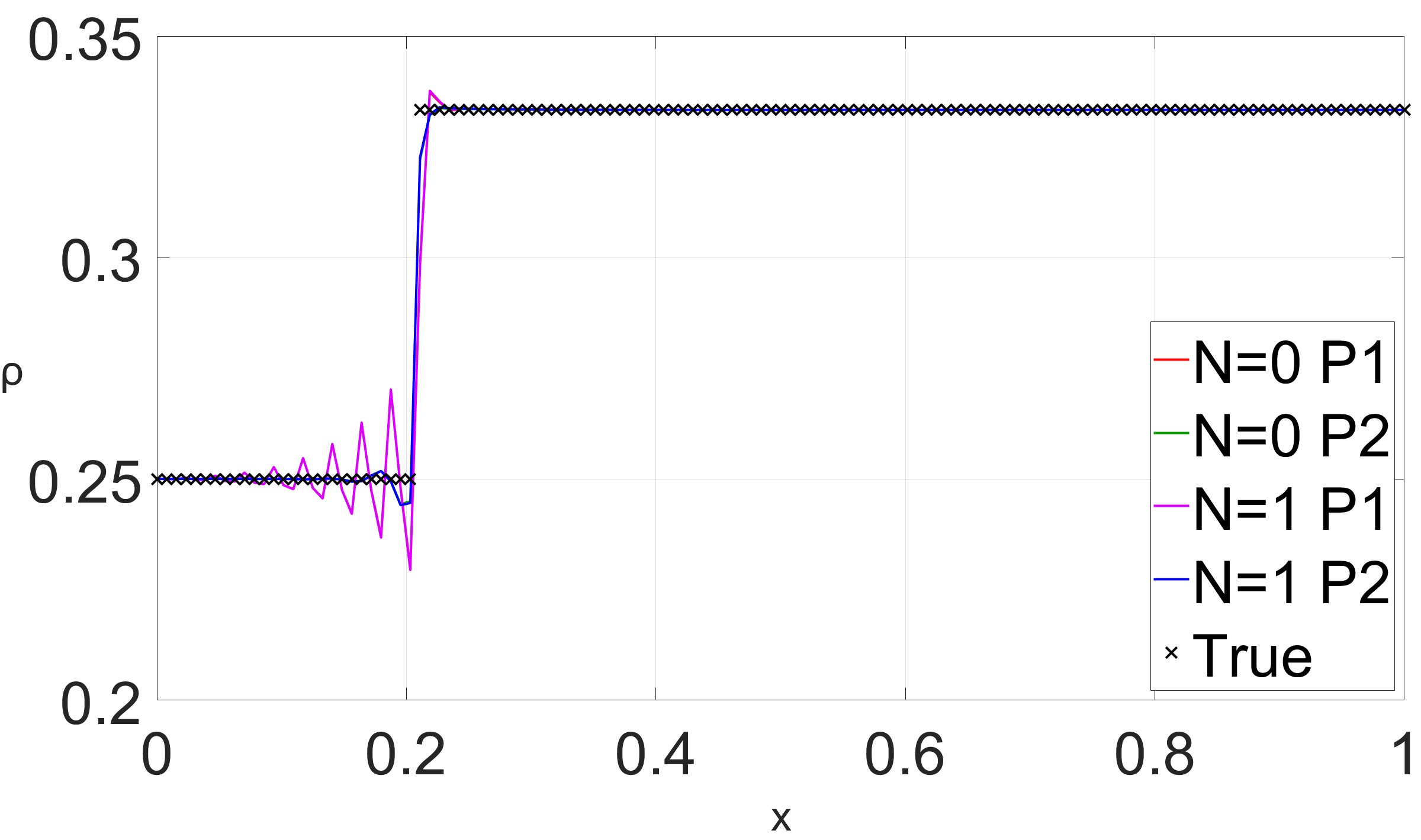}
                    \caption{$t=0.5, \chi=0.01$}
                    \label{fig:ShockwaveHalfTimeChi0.01}
                \end{subfigure}
                \begin{subfigure}{.5\textwidth}
                    \centering
                    \includegraphics[clip,scale=0.08]{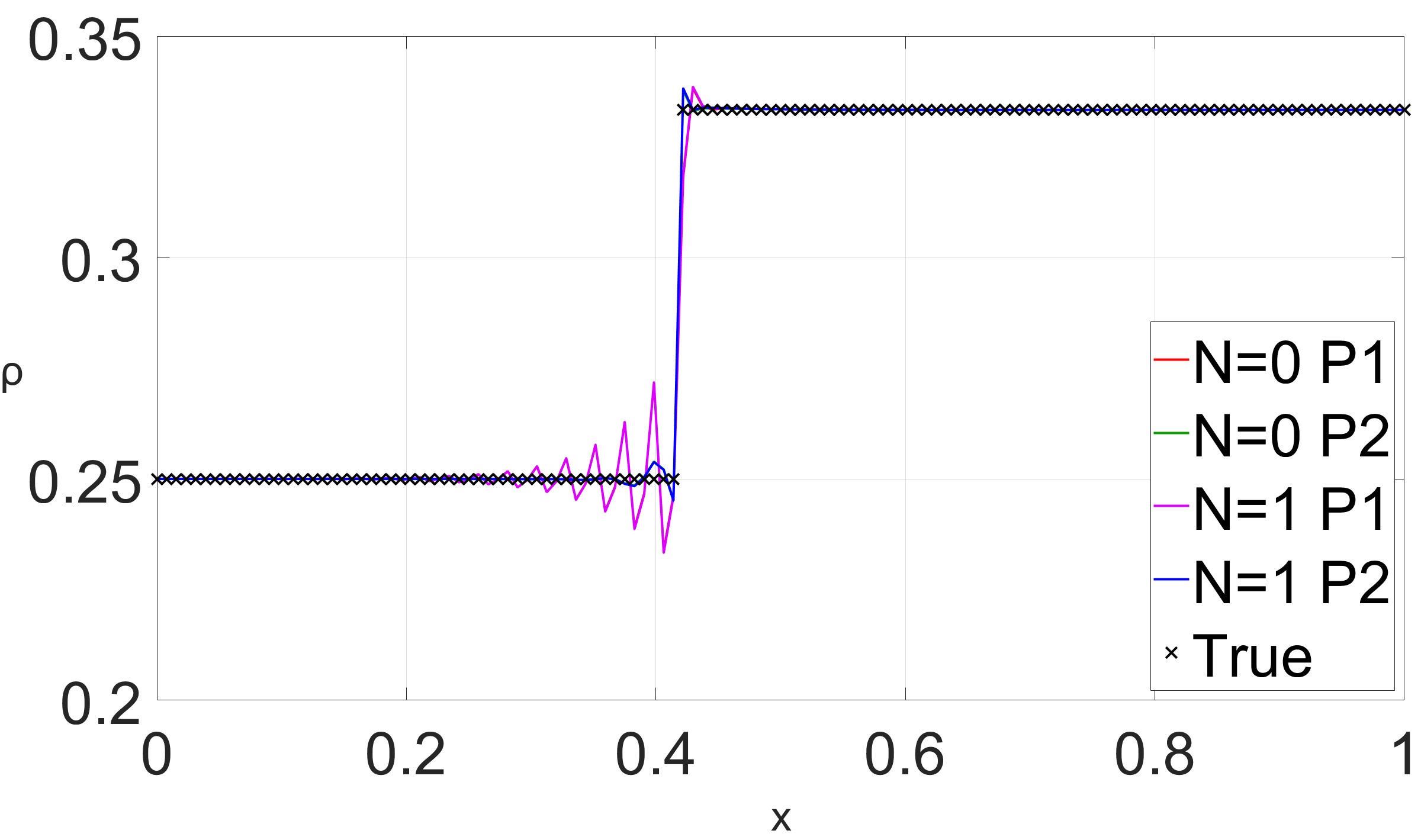}
                    \caption{$t=1, \chi=0.01$}
                    \label{fig:ShockwaveFullTimeChi0.01}
                \end{subfigure}
                \caption{Shock wave density profiles for various combinations of $N,k$}
            \end{figure}
            
\section{Conclusion}
\label{sec: end}
    After deriving numerical algorithms for the LWR model with Greenshield's velocity, stability and convergence results were presented. Computational simulations supported these results while delivering insights on the behavior of solutions to these algorithms. For instance, the nature of shockwave solutions were captured with Vreman stablization, while omitting stabilization resulted in spurious oscillations.
    
    It has been shown that the spatial and temporal rates of convergence can be improved with the use of Vreman stabilization and time filtering, respectively. Additionally, we get a better look at the dependence of solutions to Algorithms \ref{alg: BEGM} and \ref{alg: FDTFGM} on the order of deconvolution, degree of finite element space, amount of stabilization, and other numerical parameters. Thus, coupling the model with Vreman stabilization and time filtering can serve as a robust model for density profiles of RNA polymerase transcription, which can yield implications about protein synthesis.

\section{Acknowledgements}
The contribution of authors, Dr. Davis and Dr. Pahlevani, was supported by the National Science Foundation under Awards DMS-1951510 and DMS-1951563. Any opinions, findings, and conclusions or recommendations expressed in this material are those of
the author(s) and do not necessarily reflect the views
of the National Science Foundation.

\biboptions{sort&compress}
\bibliographystyle{elsarticle-num-names} 
\bibliography{cas-refs}

\begin{thebibliography}{23}
\expandafter\ifx\csname natexlab\endcsname\relax\def\natexlab#1{#1}\fi
\providecommand{\url}[1]{\texttt{#1}}
\providecommand{\href}[2]{#2}
\providecommand{\path}[1]{#1}
\providecommand{\DOIprefix}{doi:}
\providecommand{\ArXivprefix}{arXiv:}
\providecommand{\URLprefix}{URL: }
\providecommand{\Pubmedprefix}{pmid:}
\providecommand{\doi}[1]{\href{http://dx.doi.org/#1}{\path{#1}}}
\providecommand{\Pubmed}[1]{\href{pmid:#1}{\path{#1}}}
\providecommand{\bibinfo}[2]{#2}
\ifx\xfnm\relax \def\xfnm[#1]{\unskip,\space#1}\fi
\bibitem[{Davis et~al.(2024)Davis, Neda, Pahlevani, Reyes, and Waters}]{davis2024numerical}
\bibinfo{author}{L.~Davis}, \bibinfo{author}{M.~Neda}, \bibinfo{author}{F.~Pahlevani}, \bibinfo{author}{J.~Reyes}, \bibinfo{author}{J.~Waters},
\newblock \bibinfo{title}{A numerical study of a stabilized hyperbolic equation inspired by models for bio-polymerization},
\newblock \bibinfo{journal}{Computational Methods in Applied Mathematics}  (\bibinfo{year}{2024}).
\bibitem[{Davis et~al.(2021)Davis, Pahlevani, and Rajan}]{davis2021accurate}
\bibinfo{author}{L.~Davis}, \bibinfo{author}{F.~Pahlevani}, \bibinfo{author}{T.~S. Rajan},
\newblock \bibinfo{title}{An accurate and stable filtered explicit scheme for biopolymerization processes in the presence of perturbations},
\newblock \bibinfo{journal}{Applied and Computational Mathematics} \bibinfo{volume}{10} (\bibinfo{year}{2021}) \bibinfo{pages}{121--137}.
\bibitem[{Davis et~al.(2014)Davis, Gedeon, Gedeon, and Thorenson}]{davis2014_2}
\bibinfo{author}{L.~Davis}, \bibinfo{author}{T.~Gedeon}, \bibinfo{author}{J.~Gedeon}, \bibinfo{author}{J.~Thorenson},
\newblock \bibinfo{title}{A traffic flow model for bio-polymerization processes},
\newblock \bibinfo{journal}{Journal of Mathematical Biology} \bibinfo{volume}{68} (\bibinfo{year}{2014}) \bibinfo{pages}{667 -- 700}.
\bibitem[{Greenshields(1935)}]{Greenshields_1935}
\bibinfo{author}{B.~D. Greenshields},
\newblock \bibinfo{title}{A study of traffic capacity},
\newblock in: \bibinfo{booktitle}{Proceedings of the highway research board}, volume~\bibinfo{volume}{14}, \bibinfo{year}{1935}, pp. \bibinfo{pages}{448--477}.
\bibitem[{Dunca and Neda(2015)}]{dunca2015vreman}
\bibinfo{author}{A.~A. Dunca}, \bibinfo{author}{M.~Neda},
\newblock \bibinfo{title}{On the {V}reman filter based stabilization for the advection equation},
\newblock \bibinfo{journal}{Applied Mathematics and Computation} \bibinfo{volume}{269} (\bibinfo{year}{2015}) \bibinfo{pages}{379--388}.
\bibitem[{Vreman(2003)}]{vreman2003filtering}
\bibinfo{author}{A.~Vreman},
\newblock \bibinfo{title}{The filtering analog of the variational multiscale method in large-eddy simulation},
\newblock \bibinfo{journal}{Physics of Fluids} \bibinfo{volume}{15} (\bibinfo{year}{2003}) \bibinfo{pages}{L61--L64}.
\bibitem[{Guzel and Layton(2018)}]{guzel2018time}
\bibinfo{author}{A.~Guzel}, \bibinfo{author}{W.~Layton},
\newblock \bibinfo{title}{Time filters increase accuracy of the fully implicit method},
\newblock \bibinfo{journal}{BIT Numerical Mathematics} \bibinfo{volume}{58} (\bibinfo{year}{2018}) \bibinfo{pages}{301--315}.
\bibitem[{Decaria et~al.(2020)Decaria, Layton, and Zhao}]{decaria2020time}
\bibinfo{author}{V.~Decaria}, \bibinfo{author}{W.~Layton}, \bibinfo{author}{H.~Zhao},
\newblock \bibinfo{title}{A time accurate, adaptive discretization for fluid flow problems},
\newblock \bibinfo{journal}{International journal of numerical analysis and modeling} \bibinfo{volume}{17} (\bibinfo{year}{2020}).
\bibitem[{Boatman et~al.(2023)Boatman, Davis, Pahlevani, and Rajan}]{BDPS23}
\bibinfo{author}{K.~Boatman}, \bibinfo{author}{L.~Davis}, \bibinfo{author}{F.~Pahlevani}, \bibinfo{author}{T.~S. Rajan},
\newblock \bibinfo{title}{Numerical analysis of a time filtered scheme for a linear hyperbolic equation inspired by {DNA} transcription modeling},
\newblock \bibinfo{journal}{Journal of Computational and Applied Mathematics} \bibinfo{volume}{429} (\bibinfo{year}{2023}) \bibinfo{pages}{115135}.
\bibitem[{Germano(1986)}]{germano1986differential}
\bibinfo{author}{M.~Germano},
\newblock \bibinfo{title}{Differential filters of elliptic type},
\newblock \bibinfo{journal}{The Physics of fluids} \bibinfo{volume}{29} (\bibinfo{year}{1986}) \bibinfo{pages}{1757--1758}.
\bibitem[{Manica and Merdan(2007)}]{manica2007finite}
\bibinfo{author}{C.~C. Manica}, \bibinfo{author}{S.~K. Merdan},
\newblock \bibinfo{title}{Finite element error analysis of a zeroth order approximate deconvolution model based on a mixed formulation},
\newblock \bibinfo{journal}{Journal of mathematical analysis and applications} \bibinfo{volume}{331} (\bibinfo{year}{2007}) \bibinfo{pages}{669--685}.
\bibitem[{Berselli et~al.(2006)Berselli, Iliescu, and Layton}]{BIL06}
\bibinfo{author}{L.~Berselli}, \bibinfo{author}{T.~Iliescu}, \bibinfo{author}{W.~Layton}, \bibinfo{title}{Mathematics of large eddy simulation of turbulent flows}, Scientific Computation, \bibinfo{publisher}{Springer-Verlag}, \bibinfo{address}{Berlin}, \bibinfo{year}{2006}.
\bibitem[{Dunca and Epshteyn(2006)}]{dunca2006stolz}
\bibinfo{author}{A.~Dunca}, \bibinfo{author}{Y.~Epshteyn},
\newblock \bibinfo{title}{On the {S}tolz--{A}dams deconvolution model for the large-eddy simulation of turbulent flows},
\newblock \bibinfo{journal}{SIAM Journal on Mathematical Analysis} \bibinfo{volume}{37} (\bibinfo{year}{2006}) \bibinfo{pages}{1890--1902}.
\bibitem[{Layton et~al.(2008)Layton, Manica, Neda, and Rebholz}]{layton2008numerical}
\bibinfo{author}{W.~Layton}, \bibinfo{author}{C.~C. Manica}, \bibinfo{author}{M.~Neda}, \bibinfo{author}{L.~G. Rebholz},
\newblock \bibinfo{title}{Numerical analysis and computational testing of a high accuracy {L}eray-deconvolution model of turbulence},
\newblock \bibinfo{journal}{Numerical Methods for Partial Differential Equations: An International Journal} \bibinfo{volume}{24} (\bibinfo{year}{2008}) \bibinfo{pages}{555--582}.
\bibitem[{Layton and Rebholz(2012)}]{layton2012approximate}
\bibinfo{author}{W.~J. Layton}, \bibinfo{author}{L.~G. Rebholz}, \bibinfo{title}{Approximate deconvolution models of turbulence: analysis, phenomenology and numerical analysis}, volume \bibinfo{volume}{2042}, \bibinfo{publisher}{Springer Science \& Business Media}, \bibinfo{year}{2012}.
\bibitem[{Layton et~al.(2008)Layton, Manica, Neda, and Rebholz}]{layton2008helicity}
\bibinfo{author}{W.~J. Layton}, \bibinfo{author}{C.~C. Manica}, \bibinfo{author}{M.~Neda}, \bibinfo{author}{L.~G. Rebholz},
\newblock \bibinfo{title}{Helicity and energy conservation and dissipation in approximate deconvolution {LES} models of turbulence},
\newblock \bibinfo{journal}{Advances and Applications in Fluid Mechanics} \bibinfo{volume}{4} (\bibinfo{year}{2008}) \bibinfo{pages}{1--46}.
\bibitem[{Connors and Layton(2010)}]{connors2010accuracy}
\bibinfo{author}{J.~Connors}, \bibinfo{author}{W.~Layton},
\newblock \bibinfo{title}{On the accuracy of the finite element method plus time relaxation},
\newblock \bibinfo{journal}{Mathematics of computation} \bibinfo{volume}{79} (\bibinfo{year}{2010}) \bibinfo{pages}{619--648}.
\bibitem[{Brenner and Scott(2008)}]{brenner2008mathematical}
\bibinfo{author}{S.~Brenner}, \bibinfo{author}{L.~Scott}, \bibinfo{title}{The Mathematical Theory of Finite Element Methods}, \bibinfo{publisher}{Springer}, \bibinfo{year}{2008}.
\bibitem[{Gunzburger(2012)}]{gunzburger2012finite}
\bibinfo{author}{M.~D. Gunzburger}, \bibinfo{title}{Finite element methods for viscous incompressible flows: a guide to theory, practice, and algorithms}, \bibinfo{publisher}{Elsevier}, \bibinfo{year}{2012}.
\bibitem[{Heywood and Rannacher(1990)}]{heywood1990finite}
\bibinfo{author}{J.~G. Heywood}, \bibinfo{author}{R.~Rannacher},
\newblock \bibinfo{title}{{Finite-element approximation of the nonstationary Navier-Stokes problem. Part IV: Error analysis for second-order time discretization}},
\newblock \bibinfo{journal}{SIAM J. Numer. Anal.} \bibinfo{volume}{27} (\bibinfo{year}{1990}) \bibinfo{pages}{353--384}.
\bibitem[{Kang and Kwon(1997)}]{kang1997nonlinear}
\bibinfo{author}{S.-K. Kang}, \bibinfo{author}{Y.-H. Kwon},
\newblock \bibinfo{title}{A nonlinear {G}alerkin method for the {B}urgers equation},
\newblock \bibinfo{journal}{Communications of the Korean Mathematical Society} \bibinfo{volume}{12} (\bibinfo{year}{1997}) \bibinfo{pages}{467--478}.
\bibitem[{Shen and Temam(1995)}]{shen1995nonlinear}
\bibinfo{author}{J.~Shen}, \bibinfo{author}{R.~Temam},
\newblock \bibinfo{title}{Nonlinear {G}alerkin method using {C}hebyshev and {L}egendre polynomials {I.} the one-dimensional case},
\newblock \bibinfo{journal}{SIAM journal on numerical analysis} \bibinfo{volume}{32} (\bibinfo{year}{1995}) \bibinfo{pages}{215--234}.
\bibitem[{Davis et~al.(2021)Davis, Pahlevani, and Susai~Rajan}]{DPSR_2021}
\bibinfo{author}{L.~Davis}, \bibinfo{author}{F.~Pahlevani}, \bibinfo{author}{T.~Susai~Rajan},
\newblock \bibinfo{title}{An accurate and stable filtered explicit scheme for biopolymerization processes in the presence of perturbations},
\newblock \bibinfo{journal}{Applied and Computational Mathematics} \bibinfo{volume}{10} (\bibinfo{year}{2021}) \bibinfo{pages}{121--137}. \DOIprefix\doi{10.11648/j.acm.20211006.11}.

\end{thebibliography}





\end{document}